\titlespacing{\section}{0pt}{4pt}{0pt}
\newcommand{\source}{\vec{y}}  % right-hand side
\newcommand{\pre}{\Gamma}  % preconditioner
\newcommand{\preinv}{\pre^{-1}}  % preconditioner
\newcommand{\A}{A}
\newcommand{\B}{B}  % = 1-V
\newcommand{\Op}{G}  % generic operator in Lemma
\newcommand{\EE}{\vec{x}}  % solution
\newcommand{\residue}{\Delta}
\newcommand{\raw}[1]{#1_{0}}  % The original problem before complex biasing and scaling
\newcommand{\bias}[1]{\overline{#1}}
\newcommand{\fixedpointfactor}{{\alpha}}  % This is the fixed-point (Richardson iteration) scaling factor
\newcommand{\scale}{c}  % system scaling scalar factor
\newcommand{\Scale}{C}  % system scaling matrix factor
\newcommand{\condfactor}{S}  % S \defeq 
\newcommand{\iterationcounter}{i}
\newcommand{\flux}{\vec{J}} % following notation in Wang 2012
\newcommand{\density}{u} % following notation in Wang 2012
\newcommand{\arate}{\eta} % absorption rate [s^-1], equals \mu_a c in in Wang 2012
\newcommand{\vr}{\vec{r}}
\newcommand{\preconderr}{\norm{M}}
\newcommand{\lmin}{\lambda_\text{min}}
\newcommand{\lmax}{\lambda_\text{max}}
\definecolor{fast}{rgb}{0.0, 0.5,  0.0}
\newcommand{\fast}[1]{\color{fast} \textbf{#1}}
\newcommand{\minit}[1]{#1}
\begin{document}
	\begin{frontmatter}
	    \title{A universal matrix-free split preconditioner for\\ the fixed-point iterative solution of non-symmetric linear systems}
	    
	    \author[1]{Tom Vettenburg}
	    \address[1]{School of Science and Engineering, University of Dundee, Nethergate, Dundee, DD1\,4HN, Scotland, United Kingdom.}
	    \ead[1]{t.vettenburg@dundee.ac.uk}
	    \author[2]{Ivo M.~Vellekoop}
	    \address[2]{TechMed Centre, Biomedical Photonic Imaging, Faculty of Science and Technology, University of Twente, Enschede, The Netherlands.}
	    \ead[2]{i.m.vellekoop@utwente.nl}
	    \begin{abstract}
	        We present an efficient preconditioner for linear problems $A\EE=\source$.
	        It guarantees monotonic convergence of the memory-efficient fixed-point iteration for all accretive systems of the form $\A = L + V$, where $L$ is an approximation of $\A$, and the system is scaled so that the discrepancy is bounded with $\norm{V}<1$. In contrast to common splitting preconditioners, our approach is not restricted to any particular splitting. Therefore, the approximate problem can be chosen so that an analytic solution is available to efficiently evaluate the preconditioner. We prove that the only preconditioner with this property has the form $(L+\id)(\id - V)^{-1}$. 
	        This unique form moreover permits the elimination of the forward problem from the preconditioned system, often halving the time required per iteration.
	        We demonstrate and evaluate our approach for wave problems, diffusion problems, and pantograph delay differential equations. With the latter we show how the method extends to general, not necessarily accretive, linear systems.
	    \end{abstract}

	    \begin{keyword}
		    shift-splitting preconditioners\sep
		    accretive linear systems\sep 
		    iterative methods\sep
		    partial differential equations\sep
		    fixed-point Richardson iteration
	    \end{keyword}
    \end{frontmatter}
    
    \section{Introduction}
        \noindent Large linear systems are ubiquitous in the physical sciences and engineering. A vast body of work has been dedicated to the solution of the many problems that are simply too large to be represented as a coefficient matrix in computer memory. A sparse representation of the problem may offer reprieve, though typically at the cost of approximations such as finite differences~\cite{manteuffel1980incomplete, Bai2001, Zhang2002, Taflove05, Bai2009, saad2003iterative}. Matrix-free methods can avoid such approximations as they only require the computability of the forward problem. With closed-form expressions for the forward problem, matrix-free methods tend to be as efficient as they are versatile~\cite{Trefethen1997,saad2003iterative}.
        
        Perhaps the most memory-efficient iterative method for solving large systems is the fixed-point, or Richardson, iteration. Unfortunately this method often must be ruled out due to its tendency to diverge. Krylov-subspace methods are instead relied upon, even if these require the storage of multiple intermediate solution vectors. Widely-used and well-studied algorithms include the generalized minimum residuals (GMRES)~\cite{Saad1986}, the conjugate gradient squared (CGS)~\cite{Sonneveld89}, and the stabilised bi-conjugate gradient (BiCGSTAB)~\cite{vanderVorst1992,Gutknecht1993}.
        The efficiency, convergence, and stability of these algorithms often hinge on the availability of an appropriate preconditioner~\cite{vanderVorst2003,Wathen2015,Trefethen1997}. With the aid of a well-chosen preconditioner, $\pre$, a poorly-conditioned intangible problem, $\A\EE = \source$, could be solved efficiently after conversion into an equivalent, preconditioned, problem, $\preinv\A\EE = \preinv\source$. 

        Identifying an appropriate preconditioner is not always straightforward.
       
        Common matrix-free approaches to construct a preconditioner are based on the splitting of $\A$ into its Hermitian ($H$) and skew-Hermitian ($S$) parts~\cite{golub2000preconditioning, Bai2003,Arridge2013}, or shifting the original operator by a multiple of the identity operator, $\id$~\cite{Bai2006}. While this can sharply reduce the number of \emph{outer} iterations, the evaluation of the preconditioner, itself, involves the non-trivial inversion of large linear operators as $H$, $S+\id$, or $\A+\id$. This typically requires the use an iterative \emph{inner} algorithm in each step~\cite{cheung2002block, Bai2003, Arridge2013, Bai2006, Li2011}. The computational cost of such preconditioner limits its potential and may well outweigh its advantages~\cite{Bai2003, Bai2006}.
        
        Here, we adopt a alternative approach to construct a preconditioner. The proposed preconditioner is unique in the sense that it is the only form that ensures monotonic convergence of the fixed-point iteration with a general splitting $\A = L + V$, where $L$ is some approximation of the system and $V$ is bounded. As we demonstrated with several examples, a closed-form expression for $L$ can typically be found to evaluate the preconditioner efficiently without inner iterations. The proposed preconditioner further permits the elimination of the forward problem, rendering its computational cost negligible.
        
        This paper is structured as follows. First, we introduce our preconditioner and detail the implementation of the iterative algorithm, as well as the procedure to convert any linear system to the required form. In Section~\ref{sec:examples}, we demonstrate the preconditioning method with a variety of different linear problems: the Helmholtz problem, the diffusion equation, the pantograph delay-differential equation and for finding eigenmodes of the Schr\"odinger equation. In Section~\ref{sec:discussion}, we analyse the convergence behaviour of the universal split preconditioner and a shift-splitting preconditioner for different algorithms and problems. The ability to choose an arbitrary splitting with the universal-split preconditioner proved essential to ensure efficient convergence with GMRES, BiCGSTAB, and even the memory-efficient fixed-point iteration.
    
    \section{The universal split preconditioner.}
        \noindent We consider linear systems of the form $\A\EE = \source$ where $\EE$ and $\source$ are vectors of some Hilbert space $\cH$, and $\A$ is an invertible linear operator. For the moment, we assume that $\A$ is accretive, i.e.~that it has a non-negative real part, denoted as
        \begin{equation}
            \Re[\A]\geq 0 \;\; \text{ with }
            \Re[\A]\defeq \inf_{\EE\in D(\A)\setminus \vec{0}}\Re \frac{\inp{\EE,\A\EE}}{\inp{\EE, \EE}}\label{eq:accretivity-condition}
        \end{equation}
        with $D(\A)\subseteq\cH$ the domain of $\A$. Accretive operators, also called non-symmetric positive definite, or non-Hermitian positive semi-definite\cite{Bai2006}, include positive definite and skew-Hermitian operators, M-matrices, and positive stable matrices\cite{axelsson1996iterative}. Later we show how also non-accretive linear systems can be solved by converting them to an equivalent accretive form.
        
        We call a preconditioned system \emph{stable} if $M\defeq\id - \preinv\A$ is a contraction, meaning that $\norm{M}<1$, where $\norm{\cdot}$ denotes the operator norm. If the system is stable, the Neumann series 
            \begin{equation} \left(\sum_{\iterationcounter=0}^{\infty}M^\iterationcounter\right) \; \preinv\source = (\id - M)^{-1}\preinv \source = \EE,\label{eq:neumann_series}
        \end{equation}
        converges monotonically to the solution of $A\EE=\source$.
        
        Our main result is the construction of a preconditioner that satisfies the condition $\norm{\id-\preinv A}<1$ for arbitrary accretive operators $\A$.
         
        \begin{theorem}\label{th:maintext-main}
            Let $\A$ be an invertible accretive linear operator, and $L$ any linear operator such that $V \defeq A - L$ has $\norm{V}<1$. Then, it follows that the preconditioner
            \begin{equation}
                \pre \defeq (L+\id)(\id-V)^{-1}\fixedpointfactor^{-1} \label{eq:preconditioner}
            \end{equation}
            with $0<\fixedpointfactor<\fixedpointfactor_\text{max}$ is invertible, and ensures that $\norm{\id-\preinv\A} < 1$ , with $\fixedpointfactor_\text{max}\defeq 2\Re\left[\A^{-1}+(\id-V)^{-1}\right]> 1$.
            \begin{proof}
                The invertibility of $L+\id$ and $\pre$, as well as the contractivity of $\id - \preinv\A$ follows as a special case of Theorem~\ref{th:convergence-proof}, proven in the Appendix (see Corollary~\ref{th:main-corollary}). 
            \end{proof}
        \end{theorem}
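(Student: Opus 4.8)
The plan is to reduce the whole statement to one quadratic-form estimate after a single algebraic rearrangement. Since $V = \A - L$, we have $L + \id = \A + (\id - V)$, so I would first rewrite
\[
    \pre \;=\; \fixedpointfactor^{-1}\bigl(\A(\id-V)^{-1} + \id\bigr) \;=\; \fixedpointfactor^{-1}\A\bigl((\id-V)^{-1} + \A^{-1}\bigr),
\]
where $(\id-V)^{-1}$ exists and is bounded by the Neumann series because $\norm{V}<1$. I would then record two coercivity facts obtained by the same substitution trick: writing $y\defeq(\id-V)^{-1}x$, the identity $\Re\inp{x,(\id-V)^{-1}x} = \norm{y}^2 - \Re\inp{Vy,y} \ge (1-\norm{V})\norm{y}^2$ shows $(\id-V)^{-1}$ is coercive; and writing $y\defeq\A^{-1}x$, $\Re\inp{x,\A^{-1}x} = \Re\inp{\A y,y}\ge 0$ shows $\A^{-1}$ is accretive. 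Hence $(\id-V)^{-1}+\A^{-1}$ is coercive, and so is its adjoint (using $\norm{V^*}=\norm{V}<1$ and accretivity of $\A^*$), so it is boundedly invertible; together with the invertibility of $\A$ this gives invertibility of $\pre$, and then of $L+\id = \fixedpointfactor\,\pre\,(\id-V)$.

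The crucial observation is that $P\defeq\preinv\A = \fixedpointfactor\bigl((\id-V)^{-1}+\A^{-1}\bigr)^{-1}$ is a bounded operator with bounded inverse $P^{-1} = \fixedpointfactor^{-1}\bigl((\id-V)^{-1}+\A^{-1}\bigr)$, so $M\defeq\id-\preinv\A$ coincides with the bounded operator $\id-P$ on $D(\A)$ and extends to it on all of $\cH$. I would then compute, for $x\in\cH$ and $z\defeq Px$,
\[
    \inp{x,(\id-M^*M)x} \;=\; 2\Re\inp{x,Px} - \norm{Px}^2 \;=\; 2\Re\inp{z,P^{-1}z} - \norm{z}^2 .
\]
Now $2\Re\inp{z,P^{-1}z} = \tfrac{2}{\fixedpointfactor}\bigl(\Re\inp{z,(\id-V)^{-1}z} + \Re\inp{z,\A^{-1}z}\bigr) \ge \tfrac{2}{\fixedpointfactor}\,\Re\!\left[\A^{-1}+(\id-V)^{-1}\right]\norm{z}^2 = \tfrac{\fixedpointfactor_\text{max}}{\fixedpointfactor}\norm{z}^2$ by the definition of $\fixedpointfactor_\text{max}$, so the displayed quantity is at least $\bigl(\tfrac{\fixedpointfactor_\text{max}}{\fixedpointfactor}-1\bigr)\norm{Px}^2 \ge \bigl(\tfrac{\fixedpointfactor_\text{max}}{\fixedpointfactor}-1\bigr)\norm{P^{-1}}^{-2}\norm{x}^2$. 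Since $0<\fixedpointfactor<\fixedpointfactor_\text{max}$ this is $\delta\norm{x}^2$ with $\delta>0$, hence $\id-M^*M\ge\delta\,\id$ and $\norm{M}\le\sqrt{1-\delta}<1$.

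Finally, for the side assertion $\fixedpointfactor_\text{max}>1$ I would sharpen the first coercivity bound: subtracting $\tfrac12\norm{x}^2=\tfrac12\norm{(\id-V)y}^2$ from $\Re\inp{x,(\id-V)^{-1}x}$ collapses to $\tfrac12\bigl(\norm{y}^2-\norm{Vy}^2\bigr)\ge\tfrac{1-\norm{V}}{2(1+\norm{V})}\norm{x}^2$, whence $\Re[(\id-V)^{-1}]\ge\tfrac{1}{1+\norm{V}}>\tfrac12$; combined with $\Re[\A^{-1}]\ge 0$ and the superadditivity of $\Re[\cdot]$ this gives $\fixedpointfactor_\text{max}\ge\tfrac{2}{1+\norm{V}}>1$. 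The Neumann-series convergence and the adjoint/product manipulations are routine; the only genuine care is the functional-analytic bookkeeping when $\A$ is unbounded — checking that $\pre$ is boundedly invertible and that $M=\id-\preinv\A$ really extends from $D(\A)$ to a bounded operator on $\cH$, so that the operator-norm statement is meaningful. That, rather than any single inequality, is the main obstacle.
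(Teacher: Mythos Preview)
Your argument is correct and is essentially the paper's own: the substitution $z=P\EE$ and the resulting quadratic-form inequality are precisely the content of Lemmas~\ref{th:lollipop_bound_general}--\ref{th:lollipop_bound}, and your identification $P^{-1}=\fixedpointfactor^{-1}\bigl(\A^{-1}+(\id-V)^{-1}\bigr)$ is exactly Theorem~\ref{th:convergence-proof}. The only cosmetic difference is that the paper obtains invertibility of $L+\id=\A+\B$ directly from its strict accretivity via Lumer--Phillips, rather than through coercivity of the bounded operator $\A^{-1}+\B^{-1}$ as you do.
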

        By Theorem~\ref{th:maintext-main}, our construction is universal in the sense that the resulting preconditioner stabilises \emph{any} accretive operator $\A$, regardless of how the splitting $\A=L+V$ is chosen. Hence, a readily-invertible $L+\id$ can be selected to avoid the two-level iterations that are typically required with a specific splitting~\cite{Bai2003,Bai2006,Arridge2013}. Furthermore, $\preinv$ is \emph{first order} in $(L+\id)^{-1}$, which is often the operation that dominates the calculation time. In Theorem~\ref{th:uniqueness} (Appendix), we show that it is the \emph{only} preconditioner with these two properties.
    
        The preconditioner places a bound $\kappa \approx 2 \norm{\A^{-1}}\norm{V}$ on the condition number (Theorem~\ref{th:convergence_rate_bounds}, Appendix), even if the original system $\A$ is unbounded, as indeed many physical systems are.
        In other words, any sufficiently close approximation, $L = A - V$, will improve the convergence rate with any iterative method.

    \subsection{Implementation.}
        \noindent A distinctive feature of our preconditioner is that the preconditioned system $\preinv\A$ can be simplified to eliminate the forward operator $\A$. Defining $\B \defeq \id - V$, we can substitute $\A = L+\id - \B$, giving
        \begin{align}
            \preinv A &= \fixedpointfactor \B\left[\id-(L+\id)^{-1}\B\right].
        \end{align}
        This keeps the preconditioning cost to a minimum. The elimination of the potentially unbounded operator $\A$ also facilitates adequate sampling prior to its numerical evaluation.
    
        Our preconditioner can be used in many iterative schemes, including the memory-efficient fixed-point iteration, $\EE\rightarrow\EE+M\EE$, producing Neumann series~\eqref{eq:neumann_series}. Compared to other iterative schemes, the fixed-point iteration uses a minimal amount of temporary storage, a single vector, and does not require the computation of inner products. Monotonic convergence is only ensured when $\norm{M}<1$, which is precisely what the proposed preconditioner achieves. As illustrated in Algorithm~\ref{alg:basic}, the algorithm starts with an initial estimate, e.g.~$\vec{0}$, and repeatedly applies operator $M$ to find increasingly accurate estimates of $\EE$. Here, $\fixedpointfactor\in(0,1]$ can be interpreted as the step size of the fixed-point iteration (see Appendix for a discussion on the optimal step size), and $\Delta= \preinv (\source - A\EE)$ as the residual of the preconditioned system. The iteration can be stopped when the relative updates are less than a predefined threshold, $\epsilon_\mathrm{max}$.
        \begin{algorithm}[H]
            \caption{Solve the accretive system $\A\EE = \source$ for $\EE$.}\label{alg:basic}
            \begin{algorithmic}[1]
                \State $\EE \gets \vec{0}$ \Comment{ initial estimate}
                \Repeat \label{op:iteration_begin}
                \State $\residue \gets \B\left[\left(L+\id\right)^{-1} \left(\B\EE + \source\right) - \EE\right]$\label{op:residue} \Comment{ residual}
                \State $\EE \gets \EE + \fixedpointfactor \residue$ \Comment{ update}
                \Until{$\norm{\residue}/\norm{\preinv\source} < \epsilon_\mathrm{max}$} \label{op:iteration_end} \Comment{ convergence criterion}
            \end{algorithmic}
        \end{algorithm}
    
    \subsection{Conversion to canonical form.}
        \noindent We can show that any invertible linear problem can be brought into the canonical form $\A\EE=\source$ where $\A$ is accretive and $\A=L+V$ with $\norm{V}<1$. Here, we use a simple two-step heuristic to find such a splitting. Starting with the original problem $\raw{\A}\raw{\EE}=\raw{\source}$, we first identify an approximate linear system $\raw{L}$ such that the norm $\norm{\raw{\A}-\raw{L}}$ is minimised. Next, we divide the original problem by a sufficiently large complex scalar, $\scale$, so that $\norm{V} < 1$. The  canonical system is then given by
        \begin{align}
            \A &\defeq \scale^{-1} \raw{\A} \qquad& \EE &\defeq \raw{\EE}\\
            L &\defeq \scale^{-1} \raw{L} \qquad& \source &\defeq \scale^{-1} \raw{\source},
        \end{align}
        and $V\defeq \A - L$.
        
        The optimal value of $\norm{V}$ depends on the properties of the system (see Appendix). However, we found that the convergence rate is not very sensitive to the choice of $\norm{V}$, so we simply use $\norm{V}=0.95$ in what follows. When the numerical range of $\raw{\A}$ is confined to some half of the complex plane, the complex argument of $\scale$ can always be chosen so that $\A = \frac1\scale\raw{\A}$ is accretive.
        In contrast, when the numerical range of the original problem $\raw{\A}\raw{\EE} = \left(\raw{L}+\raw{V}\right)\raw{\EE} = \raw{\source}$ is not confined to one half of the complex plane, a larger system, $\A$, can always be constructed from $\raw{\A}$ and its adjoint $\raw{\A}^\ast$ as
        \begin{align}
            A &\defeq
            \scale^{-1}
            \begin{bmatrix}
                0 & -\raw{\A}^\ast\\
                \raw{\A} & 0
            \end{bmatrix}
            \qquad &
            \EE&\defeq
            \begin{bmatrix}
                \raw{\EE}\\
                \raw{\EE}^\prime
            \end{bmatrix} \label{eq:antisymmetrised}
            \\
            L &\defeq \scale^{-1}\begin{bmatrix}
                0 & -\raw{L}^\ast\\
                \raw{L} & 0
            \end{bmatrix} \qquad& 
            \source&\defeq
            \scale^{-1}
            \begin{bmatrix}
                -\raw{\source}^\prime\\
                \raw{\source}
            \end{bmatrix}.
        \end{align}
        Here, the adjoint of $\raw{L}$ is denoted by $\raw{L}^\ast$, while $\raw{\EE}^\prime$ and $\raw{\source}^\prime$ are auxiliary vectors. By choosing a real scaling factor, $c$, this construction of $\A$ is skew-Hermitian and naturally fulfils the accretivity requirement of Eq.~\eqref{eq:accretivity-condition}.

        It can be noted that the antisymmetrised problem of Eq.~\eqref{eq:antisymmetrised} combines the original problem and the adjoint problem $\raw{\A^\ast}\raw{\EE}^\prime = \raw{\source}^\prime$. If one is not interested in simultaneously solving an adjoint problem, we can choose $\raw{\source}^\prime=0$ and ignore $\EE^\prime \to 0$.
        
        \begin{figure*}
            \centering
            \includegraphics[width=\linewidth]{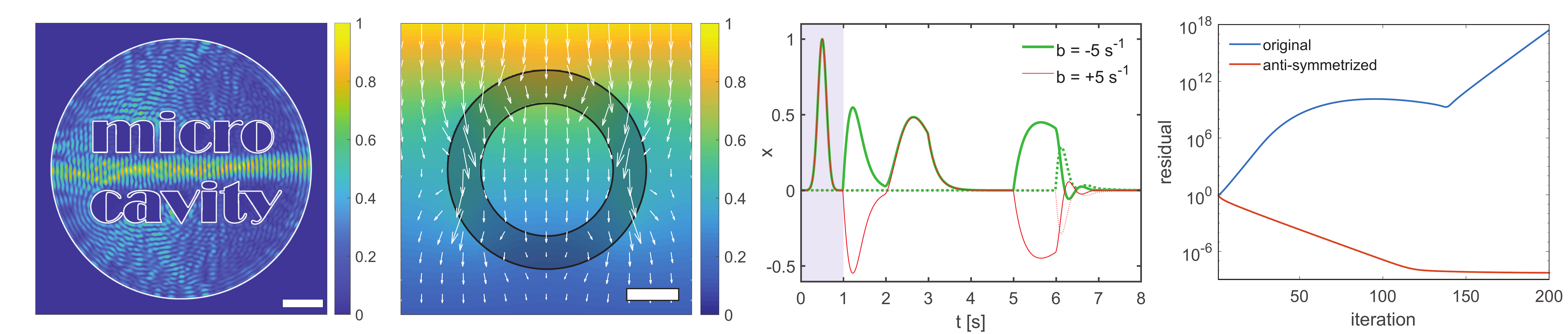}
            \begin{flushleft}
                \vspace*{-18pt}
                \hspace*{2pt}
                \begin{subfigure}[b]{37mm}
                    \caption{}\label{fig:helmholtz}
                \end{subfigure}
                \begin{subfigure}[b]{40mm}
                    \caption{}\label{fig:diffusion}
                \end{subfigure}
                \begin{subfigure}[b]{44mm}
                    \caption{}\label{fig:pantograph}
                \end{subfigure}
                \begin{subfigure}[b]{8mm}
                    \caption{}\label{fig:convergence_divergence}
                \end{subfigure}
            \end{flushleft}
            \caption{Preconditioned solutions to numerical problems in three distinct classes. \textbf{\subref{fig:helmholtz}} Scattering of a wave in an air-filled microcavity. Boundaries and letters (indicated by the white outline) have a refractive index of $2.8954 + 2.9179\ii$ (iron), and the source is located in a ring just inside the cavity wall. Scale bar: \SI{2.5}{\micro\meter}. \textbf{\subref{fig:diffusion}} Diffusion in a medium with an anisotropic diffusion coefficient, ($1\;\text{mm}^{-2}s^{-1}$ radially and $25\;\text{mm}^{-2}s^{-1}$ tangentially) in a ring (shaded area), in a background of $2\;\text{mm}^{-1}$. Sources and sinks are placed at the top and bottom, respectively. Scale bar: \SI{10}{\milli\meter}. \textbf{\subref{fig:pantograph}} The different responses of two pantograph delay-differential systems to the same Gaussian pulse. The light blue background marks the initial condition.
            \textbf{\subref{fig:convergence_divergence}} Divergence without and convergence with preconditioner. The relative residue is shown as a function of the iteration number for the Helmholtz problem (Fig.~\ref{fig:convergence_divergence}). The simulation was terminated at a relative residue of $<1\cdot 10^{-6}$}
            \label{fig:overview_figure}
        \end{figure*}

	\section{Results}\label{sec:examples}
    	\noindent In this section, we demonstrate how our preconditioner can be applied to linear problems encountered in science and engineering. As examples, we choose hyperbolic and parabolic partial differential equations (the wave equation and the diffusion equation, respectively), a delay-differential equation (the pantograph equation), and investigate the application of our preconditioner to finding the lowest energy eigenmodes of the Schr\"odinger equation. A quantitative comparison of different preconditioners and iterative algorithms will be given in Section~\ref{sec:discussion}.  
    	
    \subsection{Wave equations.}
        \noindent The equations describing acoustic, seismic, electromagnetic, and gravitational waves, or quantum wavefunctions are often linear and stationary to a very good approximation. We previously developed highly efficient approaches to solve the scalar Helmholtz equation~\cite{Osnabrugge2016}, and the time-harmonic Maxwell's equations in arbitrary anisotropic dielectric and magnetic materials~\cite{Vettenburg2019}. Other authors have demonstrated its efficiency for isotropic media, as well as for sound and seismic waves~\cite{Krueger2017,Kaushik2020,Huang2020}. We will now demonstrate that these approaches can be derived, analysed, and improved, within the proposed common framework.
    
        The inhomogeneous Helmholtz equation can be stated as
        \begin{align}
            \nabla^2\psi(\vr) + k^2(\vr) \psi(\vr) = -S(\vr)\label{eq:helmholtz},
        \end{align}
        where the physical field is represented by $\psi(\vr)$, a scalar function of the position, $\vr$.
        The position-variant wavenumber, $k(\vr)$, is generally a complex function. Provided that the system is gain-free, i.e.~the imaginary part $\Im[k^2(\vr)]\geq 0$, the numerical range of the Helmholtz equations will be confined to the upper half of the complex plane.
        
        Following the procedure outlined above, we first choose an appropriate centre value $\bias{k^2} \in \bbC$ so that $\norm{k^2(\vr) - \bias{k^2}}$ is minimised, an optimisation problem that is known as the smallest circle problem~\cite{Megiddo83}. Then, we divide through the complete system with a factor $\scale = - \frac1{0.95\ii} \norm{k^2(\vr)-\bias{k^2}}$ such that $\norm{V}=0.95$ and the resulting system is accretive.
        The scaled operator $L+\id = \frac1c \nabla^2 + \frac1c \bias{k^2} + \id$ is translation invariant and therefore can efficiently be inverted using a Fourier transform
        \begin{equation}
            (L+\id)^{-1}=\cF^{-1}\frac{\scale}{-\norm{\vec{p}}^2 + \bias{k^2} + \scale}\cF.\label{eq:laplacian_inverse}
        \end{equation}
        Here, $\cF$ denotes the Fourier transform over all spatial dimensions, and $\vec{p}$ is the Fourier-space coordinate vector. On a regular grid, a fast Fourier transform can be used to evaluate $(L+\id)^{-1}$ efficiently. To avoid the implicit periodic boundary conditions, absorbing boundaries can be added. Their thickness can be minimised using a modified fast Fourier transform that eliminates first-order wrap-around artefacts~\cite{Osnabrugge2021}.
    
        This result improves on our prior work~\cite{Osnabrugge2016} in several ways. Firstly, it is no longer required for $L$ and $V$ to be accretive individually. Therefore, we can choose $\bias{k^2}$ have a positive imaginary part if that reduces $\norm{V}$. As we will show in Section~\ref{sec:discussion}, this choice speeds up convergence in systems where absorption dominates.
        
        Secondly, the earlier work implicitly used a fixed-point iteration with $\norm{V} \rightarrow 1$ and $\fixedpointfactor = 1$. In the new framework, it is clear that any combination of $\norm{V} < 1$ and $\fixedpointfactor < 1$ can be chosen to maximise the convergence rate. Finally, the same approach can be used with other widely-used algorithms such as GMRES and BiCGSTAB.
        
        We first confirmed that our Helmholtz solver converges to the analytical solution for propagation through an empty medium~\cite{github_anysim}. Next, we tested a 1-D geometry (a glass plate, $n=1.5$, in vacuum), and a 2-D geometry (an iron structure, $n=2.8954 + 2.9179i$, irradiated from the surrounding ring). This structure represents a challenging test case because of the high refractive index contrast (causing $\norm{V}$ to be large) and the cavity nature of the structure (causing a large $\norm{\A^{-1}}$), both factors contributing to a large condition number. Figure~\ref{fig:helmholtz} shows the solution for the 2-D geometry on a $480\times 480$ grid after 6026 iterations of the preconditioned fixed-point iteration. Without preconditioning, neither the fixed-point iteration nor alternative methods (GMRES and BiCGSTAB) converged to a solution in this number of iterations.
        
        As a final note, more general problems, such as solving the time-harmonic Maxwell's equations with anisotropic, chiral, and magnetic materials---including those with negative refractive index, can be addressed by similarly inverting the vector Laplacian and incorporating the heterogeneity in a matrix-valued function $V(\vr)$~\cite{Vettenburg2019}. In this prior work it was hypothesised that the accretivity of both $L$ and $V$, individually, is sufficient to ensure monotonic convergence for all materials. With Theorem~~\ref{th:maintext-main}, we prove that this hypothesis indeed holds true, and that convergence is even guaranteed for the less strict condition that the sum $\A = L + V$ is accretive.

    \subsection{Diffusion equations.}\label{sec:diffusion}
        \noindent In this section, we consider the heat equation / diffusion equation of the form~\cite{Wang2012,Carslaw1959book}
        \begin{equation}
            \partial_t \density(\vr,t) = \nabla \cdot \left[D(\vr,t) \nabla \density(\vr,t)\right] - \arate(\vr,t) \density(\vr,t) + S(\vr,t)
            \label{eq:diffusion}
        \end{equation}
        where $\density(\vr, t)$ is the unknown density distribution to solve for, $\arate\;[s^{-1}]$ is the absorption rate, $S$ represents the source and sink density, and $\partial_t$ is the partial derivative with respect to time $t$. In 3-D space, the diffusion coefficient $D\left[m^2s^{-1}\right]$ is a $3 \times 3$-matrix-function. It need not be positive definite or satisfy the Onsager reciprocity relations~\cite{Chen2018negative},
        it is sufficient that it is accretive and invertible.
    
        %\subsubsection{Splitting and scaling}
        As with the Helmholtz problem, we would like to incorporate the differential operators in $L$, and leave any spatial variation in $D$ or $\arate$ for $V$. However, in Eq.~\eqref{eq:diffusion}, the product of $D(\vr, t)$ and two differential operators hampers rewriting it as a sum $L+V$. To proceed, we define the flux $\flux(\vr,t) \defeq -D(\vr,t) \nabla \density(\vr,t)$ and split the diffusion equation into Ficks's two laws of diffusion\cite{Wang2012}
        \begin{align}
            \partial_t \density + \nabla \cdot \flux + \arate \density &= S\qquad\text{and}\label{eq:diffusion-F-scalar}\\
            D^{-1}\flux + \nabla u &= 0,\label{eq:diffusion-F-vector}
        \end{align}
        where we omitted the $(\vr,t)$-dependency for brevity, and left-multiplied the last equation by $D^{-1}$ to separate the differential operator $\nabla$ and the spatially varying $D$.
    
        Eqs.~\eqref{eq:diffusion-F-scalar} and \eqref{eq:diffusion-F-vector} can now be written in matrix form $(L+V)\EE = \source$ using the definitions
        \begin{align}
            L &\defeq \Scale^{-\frac12}
            \begin{bmatrix}
                \partial_t + \bias{\arate} & \nabla^T\\
                \nabla & \bias{D^{-1}}
            \end{bmatrix} \Scale^{-\frac12}
            &
            \EE &\defeq \Scale^{\frac12}
            \begin{bmatrix}
                u\\ \flux
            \end{bmatrix}
            \\
            V &\defeq
            \Scale^{-\frac12}
            \begin{bmatrix}
                \arate - \bias{\arate} & 0\\
                0 & D^{-1} - \bias{D^{-1}}
            \end{bmatrix}
            \Scale^{-\frac12}
            &
            \source &\defeq \Scale^{-\frac12}
            \begin{bmatrix}
                S\\
                \mathbf{0}
            \end{bmatrix},\label{eq:diffusion-equation-matrix-form}
        \end{align}
        where $\Scale$ is a positive definite diagonal matrix that, as the scaling factor $\scale$, ensures that $\norm{V}=0.95$. Observe that the operator $L+V$ is a sum of skew-Hermitian operators ($\nabla$ and $\partial_t$) and accretive operators ($\arate$ and $D^{-1}$). Therefore, $L+V$, is accretive, and our preconditioner can be used.
        To choose $\Scale$, we first solved the smallest circle problem for each individual element of the tensor field $\raw{V}$ and included the centres of each circle (denoted as the scalar $\bias{\arate}$ and matrix $\bias{D^{-1}}$) as an offset in $\raw{L}$. We stored the radii of the circles in a separate matrix $\delta V$, and we designed $\Scale$ to equilibrate $\delta V$ using the algorithm described in~\cite{duff2001algorithms}. Importantly, this equilibration step makes $V$ non-dimensional, and reduces the amount of required rescaling which, in turn, improves the convergence rate (see Appendix). Finally, an overall scaling was applied to ensure $\norm{V}=0.95$.

        We first confirmed that our solver converges to the analytical solution for the diffusion of light in a slab geometry\cite{github_anysim}. Outside the slab, we chose $\arate = D/z_e^2$, so that the steady-state solution to the diffusion equation obeys the mixed boundary condition
        $\density = -z_e \left(\vec{n}\cdot\nabla \density\right) \label{eq:diffusion-BE}
        $, with $z_e$ the extrapolation length of the boundary~\cite{Lagendijk1989,Zhu1991}.
    
        Fig.~\ref{fig:diffusion} shows an example of a diffusion calculation in a medium with a tensor diffusion coefficient on a $256\times 256$ grid. Inside the ring, the diffusion coefficient is \SI{25}{\milli\meter^2\second^{-1}} in the tangential direction and \SI{1}{\milli\meter^2\second^{-1}} in the radial direction. The background diffusion coefficient is homogeneous with \SI{2}{\milli\meter^2\second^{-1}}; the source is located at the top of the simulation window, and an absorbing layer is placed at the bottom.

    \subsection{Pantograph equation with variable coefficients.}\label{sec:pantograph}
        \noindent The use of our preconditioner is not restricted to partial differential equations. It can be applied to completely different classes of linear problems. For instance, in delay differential equations, the derivative of the function not only depends on its value at the current time but also on its past or future value. The delay can even vary in time as with the so-called pantograph equation~\cite{Ockendon1971}, which can be written as
        \begin{align}
            -\partial_t x(t) & = a(t) x(t) + b(t) x(\lambda t) & &\text{for } t \geq t_0,
            \label{eq:pantograph}
        \end{align}
        with starting condition $x(t) = x_0(t)$ for $t<t_0$, and $\lambda>0$ a constant coefficient. As an extension to the original formulation, we allow the dependence on future values, $\lambda > 1$, as well as coefficients $a$ and $b$ that are time-dependent and complex.
        
        We start by writing the equation in the form
        \begin{align}
            A\EE & =
            \scale^{-1} \left[\left(\partial_t + a(t)\right)x(t) + b(t)x(\lambda t)\right]\label{eq:pantograph-A}\\
            \source & = \scale^{-1} \left[b(t)x_0(\lambda t) + x_0(t_0) \delta(t-t_0)\right]
        \end{align}
        on the space $t\in[t_0,\infty)$ with the usual inner product. The boundary condition $x(t_0)=x_0(t_0)$ was converted to a Dirac-delta source at $t_0$. This way, $x(t)$ vanishes at the boundaries, making $\partial_t$ a skew-Hermitian operator.
        We choose $\raw{L} = \partial_t + \bias{a}$ as approximate system, where $\bias{a}$ is a complex scalar chosen to minimise $\sup \abs{a(t)-\bias{a}}$. It can be seen that the scalar $\scale = \frac1{0.95}\left[\sup\abs{a(t)-\bias{a}} + \sup\abs{b(t)}\right]$ then guarantees that $\norm{V}\leq 0.95$. 

        Fig.~\ref{fig:pantograph} shows the result for solving the pantograph equation for inhomogeneous $a$ and $b$. In this example, $\lambda=0.5$, and $x(t)=e^{-50 (t-1)^2}$ at times $t<t_0=\SI{1}{\second}$, and a grid spacing of $\SI{0.01}{\second}$ was used. We chose $a=\SI{5}{\second^{-1}}$ for the interval $t\in[\SI{1}{\second}, \SI{6}{\second}]$, and $5 - 10\ii$ after $\SI{6}{\second}$.
        The equation was solved for both $b=\SI{5}{\second^{-1}}$ and for $b=\SI{-5}{\second^{-1}}$. In each case, we set $b=0$ in the region $t\in[\SI{3}{\second}, \SI{5}{\second}]$ so that the solution decays exponentially in this time interval.
        
        Depending on the choice of parameters, the pantograph equation may not be accretive. For example, with a choice of $\lambda=0.9, a=0.1, b=-5$, the system is non-accretive. When basing the preconditioner directly on Eq.~\eqref{eq:pantograph-A}, the fixed-point iteration diverges (see Fig.~\ref{fig:convergence_divergence}).
        However, with the antisymmetrised system of Eq.~\eqref{eq:antisymmetrised}, monotonic convergence is guaranteed, and the iteration converges to a residue of below $10^{-8}$ in $125$ iterations for the same parameters before stagnating due to the finite machine precision (Fig.~\ref{fig:convergence_divergence}).

    \subsection{Lowest energy eigenmodes.}
        \noindent A common problem in physics and engineering is to find the eigenvectors of a linear system. The solution of interest are typically the ground state and the first excited states, i.e.~the eigenvectors, $\psi$, that correspond to the eigenvalues, $E$, of lowest magnitude in the eigenvalue equation $A\psi=E\psi$.
        
        \begin{figure}
            \centering
            \includegraphics[width=\linewidth]{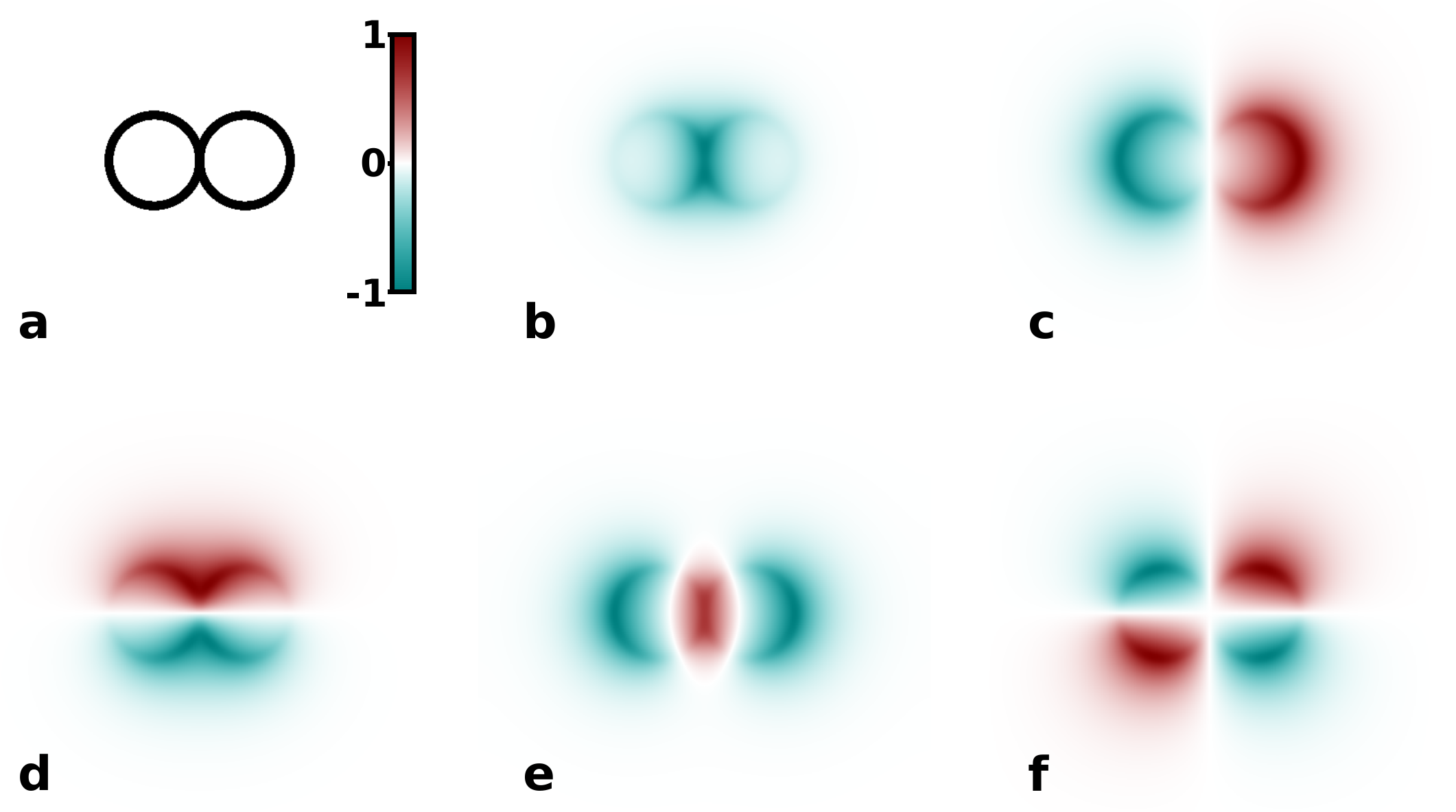}
            \begin{flushleft}
                \vspace*{-30pt}
                \hspace*{1pt}
                \begin{subfigure}[b]{10mm}
                    \phantomcaption{}\label{fig:eigenmodes_harmonic_a}
                \end{subfigure}
                \begin{subfigure}[b]{10mm}
                    \phantomcaption{}\label{fig:eigenmodes_harmonic_b}
                \end{subfigure}
                \begin{subfigure}[b]{10mm}
                    \phantomcaption{}\label{fig:eigenmodes_harmonic_c}
                \end{subfigure}
                \begin{subfigure}[b]{10mm}
                    \phantomcaption{}\label{fig:eigenmodes_harmonic_d}
                \end{subfigure}
                \begin{subfigure}[b]{10mm}
                    \phantomcaption{}\label{fig:eigenmodes_harmonic_e}
                \end{subfigure}
                \begin{subfigure}[b]{10mm}
                    \phantomcaption{}\label{fig:eigenmodes_harmonic_f}
                \end{subfigure}
            \end{flushleft}
            \caption{\textbf{Resonances in a double-ring resonator potential well.} \textbf{\subref{fig:eigenmodes_harmonic_a}} The potential well (black) has a depth of $10$ in natural units and with $m=1$. Each ring has unit radius and a channel width of $0.2$ units.  \textbf{\subref{fig:eigenmodes_harmonic_b}}-\textbf{\subref{fig:eigenmodes_harmonic_f}} The first $5$ eigenmodes in order of increasing eigenvalue are $E=7.5$, $8.5$, $8.5$, $8.7$, and $9.2$.}\label{fig:eigenmodes_harmonic}
        \end{figure}
    
        \begin{table*}[t]
        \small
        \centering
%%%%%%% START MATLAB-GENERATED TABLE
\begin{tabular}{l|c|c|c|c|c|c|c} \textbf{a. NO PRECONDITIONER} & GMRES20& GMRES5& BiCGSTAB& FP100& FP90& FP80& FP70\\
\hline
diffusion isotropic (homogeneous slab)  & m & m & d & d & d & d & d\\
diffusion anisotropic (Fig.~\ref{fig:diffusion})  &    6.8 k &    9.8 k & d & d & d & d & d\\
Helmholtz 1-D (glass plate, $n=1.5$)  &    1.2 k &    2.6 k &    3.7 k & d & d & d & d\\
Helmholtz 2-D $\bbR$ bias (Fig.~\ref{fig:helmholtz})  &    9.5 k &   26.7 k & d & d & d & d & d\\
Helmholtz 2-D $\bbC$ bias (Fig.~\ref{fig:helmholtz})  &    9.5 k &   26.3 k & d & d & d & d & d\\
Helmholtz 2-D $\bbR$ bias, dielectric  &    3.2 k &    7.1 k & s & d & d & d & d\\
Helmholtz 2-D $\bbC$ bias, dielectric  &    3.1 k &    7.1 k & s & d & d & d & d\\
pantograph non-symmetrised (Fig.~\ref{fig:pantograph})  & m & m &    3.0 k & d & d & d & d\vspace{1em}\\

\textbf{b. UNIVERSAL SPLIT PRECONDITIONER} & GMRES20& GMRES5& BiCGSTAB& FP100& FP90& FP80& FP70\\
\hline
diffusion isotropic (homogeneous slab)  & \minit{49} & 149 & \fast{60} & 578 & 642 & 722 & 826\\
diffusion anisotropic (Fig.~\ref{fig:diffusion})  & 86 & 248 & \fast{\minit{68}} & 371 & 412 & 464 & 530\\
Helmholtz 1-D (glass plate, $n=1.5$)  & 305 & \minit{300} & 430 & 463 & 323 & \fast{305} & 314\\
Helmholtz 2-D $\bbR$ bias (Fig.~\ref{fig:helmholtz})  & \minit{   3.2 k} &    4.7 k & \fast{   3.5 k} &   11.0 k &   12.1 k &   13.5 k &   15.4 k\\
Helmholtz 2-D $\bbC$ bias (Fig.~\ref{fig:helmholtz})  & \minit{   2.8 k} &    4.5 k & \fast{   3.4 k} &   29.5 k &    8.4 k &    7.7 k &    8.4 k\\
Helmholtz 2-D $\bbR$ bias, dielectric  & 125 & 142 & \minit{122} & 196 & \fast{129} & 132 & 146\\
Helmholtz 2-D $\bbC$ bias, dielectric  & 124 & 140 & \minit{121} & 173 & \fast{127} & 132 & 146\\
pantograph non-symmetrised (Fig.~\ref{fig:pantograph})  & \minit{13} & 17 & 18 & 88 & 23 & \fast{26} & 30\\
\end{tabular}
%%%%%%% END MATLAB-GENERATED TABLE
            \caption{The number of iterations (lower is better) for various problems, comparing the GMRES method (restarts after $20$ and $5$ iterations, respectively), BiCGSTAB, and fixed-point iteration with step size $\fixedpointfactor=1.0, 0.9, 0.8,$ and $0.7$, respectively.
            \textbf{a.}~The number of iterative evaluations of the systems without preconditioning. Failure to converge is indicated with the letter (s) for stagnation, (m) in case that the maximum number of iterations (30,000) are reached, and (d) when the iteration diverged.
            \textbf{b.}~The number of iterations when using the universal split preconditioner.
            All methods converge and those with the lowest execution time are highlighted in boldface green. Note that although GMRES tends to have fewer iterations, its overheads negate its advantage.}
            \label{tab:results_universal}
        \end{table*}
        
        \renewcommand{\fast}[1]{#1}  % Switch off bold-green marking 
        \begin{table*}[t]
        \small
            \centering
%%%%%%% START MATLAB-GENERATED TABLE
\begin{tabular}{l|c|c|c|c|c|c|c}
\textbf{a. SHIFT (OUTER ITERATIONS)} & GMRES20& GMRES5& BiCGSTAB& FP100& FP90& FP80& FP70\\
\hline
diffusion isotropic (homogeneous slab)  & s & \fast{\minit{50}} & s & i & 195 & 210 & 240\\
diffusion anisotropic (Fig.~\ref{fig:diffusion})  & \fast{\minit{28}} & 52 & 36 &    3.8 k & 123 & 138 & i\\
Helmholtz 1-D (glass plate, $n=1.5$)  & 97 & 106 & s & i & 67 & \fast{\minit{65}} & 75\\
Helmholtz 2-D $\bbR$ bias (Fig.~\ref{fig:helmholtz})  & s & \minit{   2.7 k} & s & m & \fast{   3.5 k} &    3.6 k &    3.9 k\\
Helmholtz 2-D $\bbC$ bias (Fig.~\ref{fig:helmholtz})  & s & \fast{\minit{   1.4 k}} & s & i &    1.6 k &    1.7 k &    1.9 k\\
Helmholtz 2-D $\bbR$ bias, dielectric  & s & 48 & 41 & i & 67 & \fast{\minit{31}} & 38\\
Helmholtz 2-D $\bbC$ bias, dielectric  & 35 & 41 & 38 & i & 37 & \fast{\minit{32}} & 36\\
pantograph non-symmetrised (Fig.~\ref{fig:pantograph})  & 9 & 10 & \fast{9} &    1.8 k & 27 & 13 & \minit{8}\vspace{1em}\\

\textbf{b. SHIFT (INNER ITERATIONS)}& GMRES20& GMRES5& BiCGSTAB& FP100& FP90& FP80& FP70\\
\hline
diffusion isotropic (homogeneous slab)  & s & \fast{\minit{   1.6 k}} & s & i &   12.3 k &   17.4 k &   23.3 k\\
diffusion anisotropic (Fig.~\ref{fig:diffusion})  & \fast{\minit{801}} &    2.1 k &    3.0 k &   92.4 k &   95.4 k &   98.8 k & i\\
Helmholtz 1-D (glass plate, $n=1.5$)  & \minit{   2.8 k} &    5.3 k & s & i &    0.7 M & \fast{   0.7 M} &    0.7 M\\
Helmholtz 2-D $\bbR$ bias (Fig.~\ref{fig:helmholtz})  & s & \minit{   0.3 M} & s & m & \fast{   1.1 M} &    1.2 M &    1.2 M\\
Helmholtz 2-D $\bbC$ bias (Fig.~\ref{fig:helmholtz})  & s & \fast{\minit{   0.2 M}} & s & i &    1.1 M &    1.1 M &    1.2 M\\
Helmholtz 2-D $\bbR$ bias, dielectric  & s & \minit{   8.4 k} &   12.6 k & i &    0.3 M & \fast{   0.3 M} &    0.3 M\\
Helmholtz 2-D $\bbC$ bias, dielectric  & \minit{   3.2 k} &    6.3 k &    9.7 k & i &    0.4 M & \fast{   0.4 M} &    0.4 M\\
pantograph non-symmetrised (Fig.~\ref{fig:pantograph})  & \minit{92} & 184 & \fast{263} &   20.1 k &   20.4 k &   20.5 k &   20.6 k\\
\end{tabular}
%%%%%%% END MATLAB-GENERATED TABLE
            \caption{Evaluation of the shift-splitting preconditioner for the same problems and algorithms investigated in Table~\ref{tab:results_universal}. \textbf{a.} The number of outer iterations needed to solve the system with the shift-splitting preconditioner. \textbf{b.} The total number of evaluations of the forward problem. This is orders of magnitude higher than the number of outer iterations since each evaluation of the shift-splitting preconditioner requires itself the iterative solution of a shifted linear system. Although the shift-splitting preconditioner ensures convergence in a low number of iterations, its total number of operations, and thus run-time, exceeds that of the universal split preconditioner (Table~\ref{tab:results_universal}).
            }
            \label{tab:results_shift}
        \end{table*}
        The eigenvectors can be found using Lanczos' method for symmetric systems or the Arnoldi iteration for non-symmetric systems. However, determining the lowest energy solutions involves the repeated inversion of the corresponding linear problem~\cite{Trefethen1997}. An appropriate preconditioner is thus also important to solve large eigenvector problems efficiently.
        
        Although an eigenvalue problem is not necessarily accretive, it is sufficient that its spectrum is bounded from below. Adding a constant value, $\bias{\A}$, to both sides of the eigenvalue equation changes the eigenvalues correspondingly while leaving its eigenvectors untouched. Our preconditioner can thus be used to calculate the lowest energy solutions for arbitrary bounded potentials.
        
        As an example, we study the motion of a simple quantum-mechanical particle with mass, $m$, in a spatially-variant potential, $V_{s}$. The time-independent Schr\"odinger equation is then given by:
        \begin{align}
            \left(-\frac{\hbar^2}{2m}\nabla^2 + V_{s} + \bias{\A} \right)\psi = \left(E + \bias{\A}\right)\psi. \label{eq:schrodinger}
        \end{align}
        Without loss of generality, in what follows we use a unit mass and natural units so that $m=\hbar=1$.
        Fig.~\ref{fig:eigenmodes_harmonic_a} depicts a potential well of depth $10$ units, consisting of two resonating rings of diameter $2$ and width $0.2$. Fig.~\ref{fig:eigenmodes_harmonic_b}-\ref{fig:eigenmodes_harmonic_f} shows the lowest energy eigenmodes. The five modes in this structure are calculated on a $512 \times 512$ grid using SciPy's implicitly restarted Lanczos method for Hermitian systems. In each iteration, Eq.~\eqref{eq:schrodinger} was inverted using the conjugate gradient method. Preconditioning reduced the condition number from $295.3$ to $1.77$, a $166$-fold improvement.

    \section{Results and discussion}\label{sec:discussion}
    \subsection{Evaluation with different iterative methods}
        \noindent The universal split preconditioner guarantees the convergence of the fixed-point iteration (Alg.~\ref{alg:basic}) for all splittings. Since it also significantly improves the condition number, we hypothesised that it also accelerates the convergence of other iterative methods. To tests this hypothesis, we solved eight different problems using the commonly-used GMRES method, the BiCGSTAB algorithm, and the fixed-point iteration with different values of the step size $\fixedpointfactor$. For each problem and algorithm, we compared the convergence behaviour before and after preconditioning with the universal split preconditioner. 
        
        All tests were performed using single-precision arithmetic on an NVIDIA RTX 3050 Ti GPU, with the default GPU-accelerated implementations of GMRES and BiCGSTAB in MATLAB\;2021b~\cite{github_anysim}. The tests correspond to the situations discussed in Section~\ref{sec:examples}. For the Helmholtz problem, we also investigated a dielectric structure with the same structure as in Fig.~\ref{fig:helmholtz}, but with refractive indices of 1.33 and 1.46, typical values for biological tissue. In addition, we compared the use of a real bias (as done for the Helmholtz equation previously~\cite{Osnabrugge2016}) and a complex-valued bias (Helmholtz 2-D $\bbC$ bias).
        
        We start by investigating whether these algorithms are capable of solving the problem without preconditioning. Table~\ref{tab:results_universal}a shows the number of evaluations of the forward operator, $\A$, that is required to reduce the relative residue, $\norm{\A \EE - \source} / \norm{\source}$, to $10^{-3}$. 
        
        The GMRES algorithm converges for most problems. However, this method requires the storage of a growing number of Krylov sub-space vectors. It is therefore custom to restart GMRES after a small number of iterations. Here, we considered restarts every $20$ and every $5$ iterations, denoted by GMRES20 and GMRES5, respectively. More frequent restarts limit the algorithm's memory requirements, though also its efficiency as can be seen from the first two columns in Table~\ref{tab:results_universal}a. For two problems, even GMRES20 failed to converge within $30,000$ iterations, possibly due to numerical instabilities.
        
        The BiCGSTAB method is an attractive alternative to GMRES since it can allocate as few as $4$ temporary vectors~\cite{vanderVorst1992}. However, BiCGStab only converged in two out of the eight test cases. In all other cases, the algorithm stagnated, which is a known problem of this algorithm~\cite{Trefethen1997}.
        The fixed-point method can be seen to diverge independently of the step size $\fixedpointfactor$. This explains why this method finds little use in spite of its simplicity and minimal memory requirements. 

        We now compare these results to the results for the preconditioned system (Table~\ref{tab:results_universal}b).
        
        It can be seen that the universal split preconditioner of Eq.~\eqref{eq:preconditioner} ensures efficient convergence for all problems and algorithms. Not only does this preconditioner ensure that all methods converge to the required tolerance, the number of iterations is often reduced by more than an order of magnitude.
    
        Overall, GMRES20 required the lowest number of iterations. However, this advantage came at a cost of increased memory use because it must store the $20$ Krylov sub-space vectors. Also, GMRES has a considerable computation overhead related to the orthogonalization of the Krylov sub-space vectors~\cite{Trefethen1997}. For example, we found that the standard MATLAB implementation of GMRES20 spent only 5\% of the execution time on evaluating the preconditioned operator for the 1-D Helmholtz problem.
 
 In Table \ref{tab:results_universal}b, the methods with the shortest execution time are marked in bold-face, and is can be seen that GMRES is never the fastest method, even when it converges in the lowest number of iterations.
        
        For the fixed-point iterations, the step size $\fixedpointfactor$ plays an important role in the convergence behaviour, as expected from Theorem~\ref{th:convergence_rate_bounds}. It can be seen that the optimum values are found between $\fixedpointfactor=0.80$ and $1.00$.

        For wave problems, the universal split preconditioner generalises the convergent Born series method~\cite{Osnabrugge2016,Krueger2017,Vettenburg2019}. Even though the method has already been shown to outperform pseudo-spectral time domain and finite difference time domain methods in both speed and accuracy~\cite{Kaushik2020,Huang2020,Osnabrugge2021}, we can now conclude that the convergent Born series corresponds to the sub-optimal choices of $\fixedpointfactor=1$, $\norm{V}\rightarrow 1$, and a real bias in the $L+V$ splitting. The results for the Helmholtz problem with a high-contrast iron structure show that it is beneficial to choose a complex bias in cases that absorption is dominant. This optimisation improved the convergence speed of the fixed-point iteration by $30\%$ over the real-biased case. It should be noted that this optimisation makes the tuning of the step size, $\fixedpointfactor$, more critical for the fixed-point iteration.
        
        The fastest methods were either BiCGSTAB or the fixed-point iteration, depending on the specific problem. Although the fixed-point method is often disregarded for its tendency to diverge, it stands out for its simplicity, low overhead, and its minimal memory requirements. By ensuring the convergence of the fixed-point iteration, the universal split preconditioner can make it the method of choice for situations where memory usage is the limiting factor.

    \subsection{Comparison with alternative preconditioning}
        \noindent A large number of problem-specific preconditioners has been proposed in the literature. However, there are fewer preconditioners that are generally applicable to all linear problems. One that stands out here is the shift preconditioner proposed by Bai, Yin, and Su~\cite{Bai2006}. This preconditioner works for all accretive linear systems, uses fewer steps than Hermitian-skew-Hermitian splitting methods, and was shown to outperform other universal preconditioners, such as the incomplete triangular factorization (ILU) and incomplete Givens orthogonalization (IGO) preconditioners~\cite{Bai2001, Bai2009}. 
    
        The shift preconditioner can be used to solve strict-accretive systems, i.e.~systems where $\Re [\A] > 0$. It is defined by Bai et al.~as $\pre_\text{shift} = \frac12\left(\raw{\A} + \gamma \id \right)$, where $\gamma > 0$ is a positive scalar that must be chosen for optimal convergence~\cite{Bai2006}.
        Interestingly, this shift preconditioner is a  special case of the universal split preconditioner we defined in  Eq.~\eqref{eq:preconditioner}. Using $V = 0$, $L = A = \frac1\gamma \raw{\A}$, and $\fixedpointfactor = 2$ it can be seen that $\preinv\A = \preinv_\text{shift}\raw{\A}$.
        By Theorem~\ref{th:maintext-main}, convergence of the fixed-point iteration is guaranteed when $\fixedpointfactor < 2\Re\left[A^{-1}+B^{-1}\right]$. As $\fixedpointfactor=2$ and $\B=\id$, this only holds for the strict accretive systems assumed by Bai et al.

        Table~\ref{tab:results_shift} shows the number of evaluations with the shift-splitting preconditioner. The number of \emph{outer} iterations (Table~\ref{tab:results_shift}a) is sharply reduced compared to the non-preconditioned case, and the preconditioner makes the fixed-point convergent for most parameterisations ($\fixedpointfactor \le 0.9$). While the convergence properties are generally improved, in some cases GMRES20 or BiCGSTAB can be seen to stagnate (marked `s').

        The shift-splitting preconditioner, however, is computationally expensive to evaluate, since it requires the iterative solution of the inner problem $\frac12(A_0+\gamma I)\EE=\source$. This inner problem, itself, may need a pre-conditioner to be solved efficiently. In our tests, we preconditioned the inner problem with our universal split preconditioner since it ensures convergence of the inner iteration, and we solved it using BiCGSTAB.

        When taking into account these inner iterations (Table~\ref{tab:results_shift}b), it can be seen that when the the shift-split preconditioner is used,  roughly 10 to 1000 times more iterations are needed than when our universal split preconditioner is used.

    \section{Conclusion}
        \noindent The proposed universal split preconditioner is highly effective to solve a broad class of problems. It can be considered a generalisation of the shift-splitting preconditioner~\cite{Bai2006} to arbitrary splittings. This universality permits the use of a readily-invertible approximate system, such as the equivalent homogeneous problem, virtually eliminating the preconditioning cost. With Theorem~\ref{th:uniqueness}, we proved that Eq.~\eqref{eq:preconditioner} is the only possible universal split preconditioner that guarantees monotonic convergence of the fixed-point iteration. 

        We demonstrated that the universal split preconditioner is effective for a broad range of problems, including common inhomogeneous wave problems, diffusion problems, eigenvalue problems, and less-common problems such as the pantograph delay differential equation. We further showed that for non-accretive problems an augmented skew-Hermitian system can be constructed so that it can be solved using the universal split preconditioner.
        
        Even though for each of these applications problem-specific preconditioners may be constructed, the universal split preconditioner provides an effective generic approach that is straightforward to implement and analyse. As such, it can serve as a starting point for designing problem-specific or higher order approaches.
        
        The universal split preconditioner was essential to ensure convergence with gold-standard algorithms such as GMRES and BiCGSTAB, while outperforming the shift-splitting preconditioner. The universal split preconditioner guarantees monotonic convergence of the fixed-point iteration. Its minimal memory requirements, simplicity, and efficiency make it compelling for the most demanding problems.
        
    \section*{Acknowledgements}
        \noindent IMV is supported by the European Research Council under the European Union's Horizon 2020 Programme / ERC Grant Agreement $\text{n}^\circ$ 678919 and NWO Vidi grant $\text{n}^\circ$14879. TV is a UKRI Future Leaders Fellow supported by grant MR/S034900/1. IMV thanks Matthias Schlottbom for useful discussions. For the purpose of open access, the author(s) has applied a Creative Commons Attribution (CC BY) licence to any Author Accepted Manuscript version arising.
    
    \section*{Author contributions}
        \noindent Both authors developed the theory together, IMV implemented the wave and diffusion calculations, TV and IMV worked on the pantograph calculation. TV implemented the eigenmode calculations. Both authors wrote and reviewed the manuscript.
        
    \section*{Data and code availability}
        \noindent All presented data can be reproduced by the included code. The source code for all examples and visualisation is made publicly available as the AnySim library~\cite{github_anysim}.
        
    \section*{Appendix - Proofs of convergence and uniqueness}
    \noindent In this section, we first prove that our preconditioner stabilises \emph{any} accretive linear system in the sense that $\norm{\id-\preinv\A}<1$.    
    Next, we go one step further by showing that, up to a prefactor, our preconditioner is the \emph{only} preconditioner that universally stabilises all accretive linear systems with a single evaluation of the operator $(L+\id)^{-1}$. Finally, we place bounds on the condition number of the preconditioned system as well as on the convergence rate of the fixed-point iteration.

    In what follows, all operators act on a non-empty Hilbert space, $\cH$, and bounded operators have the complete Hilbert space, $\cH$, as their domain. This is not a severe restriction since, by the Hahn-Banach theorem, a bounded operator can always be extended to the full Hilbert space. Likewise, for any (potentially unbounded) invertible operator, we require that the domain of its (bounded) inverse is the whole Hilbert space.

    \subsection{Monotonic convergence of the preconditioned fixed-point iteration.~~~~~~~~~}
        \noindent To support the main theorem, we introduce two lemmas. 
    
        \begin{lemma}\label{th:lollipop_bound_general}
            Let $\Op$ be a bounded invertible linear operator. Then
            \begin{equation}
                \norm{\id-\Op}^2=1 + \sup_{\norm{\EE}=1}\left(\frac{1-2 \Re\inp{\EE, \Op^{-1}\EE}}{\norm{\Op^{-1}\EE}^2}\right)
                \label{eq:lemma-Rayleigh-condition-general}
            \end{equation}
            \begin{proof}
                Expand the definition of the operator norm squared $\norm{1-\Op}^2 =$
                \begin{align}
                    & \sup_{\source \ne \vec{0}}\frac{\norm{(\id-\Op)\source}^2}{\norm{\source}^2} = 1 + \sup_{\EE \ne \vec{0}}\frac{\norm{\EE}^2-2\Re\inp{\EE,\Op^{-1}\EE}}{\norm{\Op^{-1}\EE}^2},
                \end{align}
                where the substitution $\source=\Op^{-1}\EE$ is justified by the invertibility of $\Op$. The supremum of the fraction can be seen to be independent of $\norm{\EE}$ for $\EE \ne \vec{0}$, so we can choose $\norm{\EE}=1$, thereby completing the proof.
            \end{proof}
        \end{lemma}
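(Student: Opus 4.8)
The plan is to start from the variational definition of the operator norm and convert it into a statement about the \emph{inverse} of $\Op$ by a change of variables that the invertibility of $\Op$ makes legitimate. Concretely, I would write $\norm{\id-\Op}^2=\sup_{\source\ne\vec{0}}\norm{(\id-\Op)\source}^2/\norm{\source}^2$ and substitute $\source=\Op^{-1}\EE$. Since $\Op$ is bounded and invertible with its inverse defined on all of $\cH$ (the standing convention of this appendix), $\EE\mapsto\Op^{-1}\EE$ is a bijection of $\cH\setminus\{\vec{0}\}$ onto itself, so the supremum is unchanged. Then $(\id-\Op)\Op^{-1}\EE=\Op^{-1}\EE-\EE$, and expanding the numerator gives $\norm{\Op^{-1}\EE}^2-2\Re\inp{\EE,\Op^{-1}\EE}+\norm{\EE}^2$, using $\inp{\Op^{-1}\EE,\EE}+\inp{\EE,\Op^{-1}\EE}=2\Re\inp{\EE,\Op^{-1}\EE}$, while the denominator becomes $\norm{\Op^{-1}\EE}^2$.

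Next I would divide through and pull the resulting constant out of the supremum, obtaining $\norm{\id-\Op}^2=1+\sup_{\EE\ne\vec{0}}\frac{\norm{\EE}^2-2\Re\inp{\EE,\Op^{-1}\EE}}{\norm{\Op^{-1}\EE}^2}$. The final step is to restrict to unit vectors: both numerator and denominator are homogeneous of degree two under $\EE\mapsto t\EE$ for any nonzero scalar $t$, so the ratio is scale-invariant and the supremum over $\EE\ne\vec{0}$ equals the supremum over $\norm{\EE}=1$ (a unit vector exists because $\cH$ is nontrivial). Setting $\norm{\EE}=1$ replaces $\norm{\EE}^2$ by $1$ and gives the claimed identity.

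I do not expect a genuine obstacle here. The two points that warrant care are (i) justifying that the change of variables preserves the supremum, which is exactly where boundedness and everywhere-definedness of $\Op^{-1}$ are used, and (ii) confirming both sides are finite --- the left side because $\Op$, and hence $\id-\Op$, is bounded, and the right side because $\norm{\EE}=\norm{\Op\,\Op^{-1}\EE}\le\norm{\Op}\,\norm{\Op^{-1}\EE}$ keeps the denominator bounded away from zero on the unit sphere while the numerator is bounded there by $1+2\norm{\Op^{-1}}$.
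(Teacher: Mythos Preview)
Your proposal is correct and follows essentially the same route as the paper's own proof: write the operator norm as a supremum, substitute $\source=\Op^{-1}\EE$ using invertibility, expand $\norm{\Op^{-1}\EE-\EE}^2$ and divide through, then use degree-two homogeneity to restrict to the unit sphere. Your version is in fact slightly more detailed than the paper's (explicitly writing out the inner-product expansion and checking finiteness), but the argument is the same.
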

    
        \begin{lemma}\label{th:lollipop_bound}
            For any invertible linear operator, $\Op$, the following conditions are equivalent
            \begin{equation}
                \norm{\id-\Op} < 1 \quad \Leftrightarrow \quad \Re\left[\Op^{-1}\right] > \frac12
            \end{equation}
            \begin{proof}
                From Lemma~\ref{th:lollipop_bound_general}, we find the equivalence $\norm{\id-\Op}^2 < 1 \; \Leftrightarrow$
                \begin{align}
                     \sup_{\norm{\EE}=1}\left(\frac{1 - 2 \Re\inp{\EE, \Op^{-1}\EE}}{\norm{\Op^{-1}\EE}^2}\right) &< 0 \quad  \Leftrightarrow\\ \inf_{\norm{\EE}=1}\left(2\Re\inp{\EE, \Op^{-1}\EE}-1\right) &> 0,
                \end{align}
                and using the definition Eq.~\eqref{eq:accretivity-condition} completes the proof.
            \end{proof}
        \end{lemma}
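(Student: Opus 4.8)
The plan is to derive the equivalence directly from Lemma~\ref{th:lollipop_bound_general}, collapsing everything to a sign condition on a supremum, and then to read that supremum back in terms of the operator real part of Eq.~\eqref{eq:accretivity-condition}.

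First I would note that $\norm{\id-\Op}<1 \iff \norm{\id-\Op}^2<1$, and apply Eq.~\eqref{eq:lemma-Rayleigh-condition-general} to rewrite the latter as
\[
\sup_{\norm{\EE}=1}\frac{1-2\Re\inp{\EE,\Op^{-1}\EE}}{\norm{\Op^{-1}\EE}^2}<0 .
\]
Because $\Op$ is invertible, $\Op^{-1}\EE\neq\vec{0}$ for $\EE\neq\vec{0}$, so every denominator is strictly positive and each summand of the supremum carries the sign of its numerator $1-2\Re\inp{\EE,\Op^{-1}\EE}$. The aim is then to promote this to the equivalence
\[
\sup_{\norm{\EE}=1}\frac{1-2\Re\inp{\EE,\Op^{-1}\EE}}{\norm{\Op^{-1}\EE}^2}<0
\quad\iff\quad
\inf_{\norm{\EE}=1}\bigl(2\Re\inp{\EE,\Op^{-1}\EE}-1\bigr)>0 ,
\]
and to close by identifying $\inf_{\norm{\EE}=1}\Re\inp{\EE,\Op^{-1}\EE}=\Re[\Op^{-1}]$ via the definition in Eq.~\eqref{eq:accretivity-condition}, so that the right-hand condition is exactly $\Re[\Op^{-1}]>\frac12$.

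The step I expect to need the most care — and the only non-formal one — is that last displayed equivalence: passing from a strictly negative supremum to a strictly positive infimum of the numerators is legitimate only if the denominator $\norm{\Op^{-1}\EE}^2$ stays bounded away from both $0$ and $\infty$ uniformly over the unit sphere, i.e.\ only if $\Op^{-1}$ and $\Op$ are both bounded. Boundedness of $\Op^{-1}$ is part of the standing convention on invertible operators. Boundedness of $\Op$ is available in each direction separately: if $\norm{\id-\Op}<1$ then $\Op=\id-(\id-\Op)$ is bounded; and if $\Re[\Op^{-1}]\ge r>\frac12$, then for $\psi\in D(\Op)$ the Cauchy--Schwarz bound $r\norm{\Op\psi}^2\le\Re\inp{\Op\psi,\psi}\le\norm{\Op\psi}\,\norm{\psi}$ gives $\norm{\Op}\le r^{-1}$. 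With $\Op$ bounded one has $\norm{\Op}^{-1}\le\norm{\Op^{-1}\EE}\le\norm{\Op^{-1}}$ for every unit $\EE$, and the two infima are then linked with an explicit gap (a supremum $-\epsilon<0$ forces every numerator $\le-\epsilon\norm{\Op}^{-2}<0$, while an infimum $m>0$ forces every fraction $\le-m\norm{\Op^{-1}}^{-2}<0$), which settles the equivalence. If the lemma is read as implicitly concerning bounded $\Op$, this collapses to a one-line squeeze; I would keep the boundedness observation so the statement holds verbatim for arbitrary invertible $\Op$.
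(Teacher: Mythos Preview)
Your argument is correct and follows exactly the paper's route: apply Lemma~\ref{th:lollipop_bound_general}, reduce the sign of the supremum to the sign of the numerators, and read off $\Re[\Op^{-1}]>\tfrac12$ via Eq.~\eqref{eq:accretivity-condition}. Your added care about boundedness of $\Op$ (needed so the positive denominators are bounded away from $0$ and $\infty$ and the strict inequalities survive the passage from supremum to infimum) is a point the paper's proof simply elides, so your version is, if anything, more complete.
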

        
        \noindent The proof of the main theorem is underpinned by these lemmas. To state the main theorem, we define $\B\defeq \id-V$, so that $L+\id=\A+\B$ and $\preinv = \B(\A+\B)^{-1}$.
    
        \begin{theorem}\label{th:convergence-proof}
            Let $\A$, $\B$, and $\A+\B$ be invertible linear operators, and let $\fixedpointfactor > 0$; then
            \begin{equation}
                \norm{\id-\preinv \A}<1\quad\text{where}\quad \preinv\defeq \fixedpointfactor B(A+B)^{-1}, \label{eq:main-theorem}
            \end{equation}
            if and only if $\fixedpointfactor < 2\Re\left[\A^{-1}+\B^{-1}\right]$.
            \begin{proof}
                Lemma~\ref{th:lollipop_bound} shows that $\norm{1-\preinv \A}<1$ is equivalent to $\Re\left[A^{-1}\pre\right] > \frac12$. By substituting $\pre \defeq (A+B)B^{-1}\fixedpointfactor^{-1}$, the latter inequality can be rewritten as
                \begin{equation}
                    \Re\left[A^{-1}(A+B)B^{-1}\fixedpointfactor^{-1}\right] = \Re\left[A^{-1}+B^{-1}\right]\fixedpointfactor^{-1} > \frac12.
                \end{equation}
                This is equivalent to $\fixedpointfactor < 2\Re\left[\A^{-1}+\B^{-1}\right]$, thus completing the proof.
            \end{proof}
        \end{theorem}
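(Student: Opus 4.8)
The plan is to convert the operator-norm bound into an equivalent numerical-range inequality, for which Lemma~\ref{th:lollipop_bound} is tailor-made, and then let a one-line algebraic identity finish the job. The crucial observation is that the composite operator $\A^{-1}\pre$ telescopes to the sum $\A^{-1}+\B^{-1}$, after which the positive scalar $\fixedpointfactor$ separates out and the stated inequality emerges unchanged.

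First I would set $\Op\defeq\preinv\A$. Since $\A$, $\B$, and $\A+\B$ are invertible and $\fixedpointfactor\neq 0$, the operator $\pre=(\A+\B)\B^{-1}\fixedpointfactor^{-1}$ is invertible, hence so is $\Op=\preinv\A$, and Lemma~\ref{th:lollipop_bound} is applicable to $\Op$. That lemma converts the goal $\norm{\id-\preinv\A}<1$ into the equivalent condition $\Re\left[(\preinv\A)^{-1}\right]>\frac12$, that is, $\Re\left[\A^{-1}\pre\right]>\frac12$, where $\Re[\cdot]$ denotes the numerical-range infimum of Eq.~\eqref{eq:accretivity-condition}.

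Next I would substitute $\pre=(\A+\B)\B^{-1}\fixedpointfactor^{-1}$ and simplify using $\A^{-1}\A=\id$:
\[
    \A^{-1}\pre \;=\; \A^{-1}(\A+\B)\B^{-1}\fixedpointfactor^{-1} \;=\; \left(\B^{-1}+\A^{-1}\right)\fixedpointfactor^{-1}.
\]
Because $\fixedpointfactor>0$ is a positive real scalar, it passes outside the real part, so $\Re\left[\A^{-1}\pre\right]=\fixedpointfactor^{-1}\,\Re\left[\A^{-1}+\B^{-1}\right]$, and the condition $\Re\left[\A^{-1}\pre\right]>\frac12$ becomes $\fixedpointfactor^{-1}\,\Re\left[\A^{-1}+\B^{-1}\right]>\frac12$, i.e.\ $\fixedpointfactor<2\Re\left[\A^{-1}+\B^{-1}\right]$. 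Every step above is an equivalence, so both directions of the ``if and only if'' follow simultaneously.

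The argument is short because Lemma~\ref{th:lollipop_bound} --- itself resting on the supremum computation in Lemma~\ref{th:lollipop_bound_general} --- has already absorbed all of the analysis, reducing a supremum over the unit sphere to a clean resolvent inequality. The only place that needs care is the operator bookkeeping: one must confirm that $\preinv\A$ is genuinely invertible so that Lemma~\ref{th:lollipop_bound} may be invoked, and that $\A^{-1}(\A+\B)\B^{-1}=\A^{-1}+\B^{-1}$ is an identity of operators with full domain $\cH$ rather than one holding only on a dense subspace. Under the standing convention that invertible operators possess bounded inverses defined on all of $\cH$, both points are immediate, and I anticipate no obstacle beyond this routine verification.
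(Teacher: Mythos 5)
Your argument is identical to the paper's: invoke Lemma~\ref{th:lollipop_bound} to recast $\norm{\id-\preinv\A}<1$ as $\Re\left[\A^{-1}\pre\right]>\frac12$, then substitute $\pre=(\A+\B)\B^{-1}\fixedpointfactor^{-1}$ and telescope $\A^{-1}(\A+\B)\B^{-1}=\A^{-1}+\B^{-1}$. The extra remarks on invertibility of $\preinv\A$ and on the full-domain validity of the operator identity are sound bookkeeping but do not alter the route.
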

    
        \begin{corollary}\label{th:main-corollary}
            Let $\A$ be an accretive invertible linear operator, and let $\B\defeq \id-V$, with $\norm{V}<1$; then, for any $\fixedpointfactor\in(0,1]$, it follows that $\preinv$ exists and $\norm{\id-\preinv \A}<1$.
        \begin{proof}
            First note that $\B$ is accretive, with $\Re{\left[\B\right]}\;\geq 1-\norm{V}$. Similarly, $\A + \B$ is accretive, with $\Re{\left[\A + \B\right]}\geq 1-\norm{V}$.
            The invertibility of $\B$ and $\A+\B$ follows (Lumer and Phillips \cite{Lumer1961}, Lemma~3.1). Hence, $\preinv \defeq \fixedpointfactor B(A+B)^{-1}$ can be constructed.

            By Lemma~\ref{th:lollipop_bound}, $\norm{\id-\B}<1$ implies that $2\Re\left[\B^{-1}\right] > 1 \ge \fixedpointfactor$. Since $\A$ is accretive, so is its inverse, $\Re\left[\A^{-1}\right] \ge 0$. Therefore, $2\Re\left[\A^{-1}+\B^{-1}\right] > \fixedpointfactor$. Substitution in Theorem~\ref{th:convergence-proof} completes the proof of the corollary.

            Note that since $\preinv$ is bounded, by the Hahn-Banach theorem its domain can be extended to the full Hilbert space, regardless of the domain of $\A$.

            Finally, defining $L \defeq \A - V$ we can substitute $A+B = L+\id$, giving the form used in Theorerem~\ref{th:maintext-main} in the main text.
        \end{proof}
        \end{corollary}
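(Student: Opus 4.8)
The corollary is a direct specialisation of Theorem~\ref{th:convergence-proof} to the choice $\B \defeq \id - V$, so the plan is essentially to verify the two hypotheses of that theorem --- invertibility of $\B$ and $\A+\B$, and the step-size bound $\fixedpointfactor < 2\Re[\A^{-1}+\B^{-1}]$ --- and then to rename the operators so as to match Eq.~\eqref{eq:preconditioner}.

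First I would show that $\B$ and $\A+\B$ are invertible, so that $\preinv \defeq \fixedpointfactor\B(\A+\B)^{-1}$ makes sense. For any unit vector $\EE$, $\Re\inp{\EE,\B\EE} = 1 - \Re\inp{\EE,V\EE} \geq 1 - \norm{V} > 0$, so $\B$ is strictly accretive; since $\A$ is accretive, $\Re\inp{\EE,(\A+\B)\EE}\geq 1-\norm{V}>0$ on $D(\A)$ as well. A standard Lumer--Phillips argument (Lumer and Phillips \cite{Lumer1961}, Lemma~3.1) then gives that both operators possess bounded inverses defined on all of $\cH$, and by the Hahn--Banach extension noted earlier the composite $\preinv$ is a bounded operator on $\cH$ regardless of whether $\A$ itself is bounded.

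Next I would verify the quantitative condition of Theorem~\ref{th:convergence-proof} by splitting it into two independent pieces. Because $\norm{\id-\B} = \norm{V} < 1$, Lemma~\ref{th:lollipop_bound} yields $\Re[\B^{-1}] > \frac12$. Since $\A$ is accretive and invertible with $\A^{-1}$ bounded on $\cH$, its inverse is accretive too: for $\EE = \A^{-1}\vec{w}$ one has $\Re\inp{\vec{w},\A^{-1}\vec{w}} = \Re\inp{\EE,\A\EE}\geq 0$, hence $\Re[\A^{-1}]\geq 0$. Adding these two Rayleigh-quotient bounds pointwise gives $\Re\inp{\EE,(\A^{-1}+\B^{-1})\EE}\geq \Re[\B^{-1}] > \frac12$ for every unit $\EE$, so $2\Re[\A^{-1}+\B^{-1}] > 1 \geq \fixedpointfactor$. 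Theorem~\ref{th:convergence-proof} then delivers $\norm{\id - \preinv\A} < 1$.

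Finally I would translate to the notation of Theorem~\ref{th:maintext-main}: with $L\defeq \A - V$ one has $L+\id = \A+\B$, so $\preinv = \fixedpointfactor\B(L+\id)^{-1}$, i.e.\ $\pre = (L+\id)(\id-V)^{-1}\fixedpointfactor^{-1}$ as in Eq.~\eqref{eq:preconditioner}, and the estimate above identifies $\fixedpointfactor_\text{max} = 2\Re[\A^{-1}+(\id-V)^{-1}] > 1$. I expect the only delicate point to be the invertibility step of the second paragraph: since $\A$ may be unbounded, the compositions $\A^{-1}$, $(\A+\B)^{-1}$ and their products must be handled so that each remains bounded with domain $\cH$, and it is the strict lower bound $1-\norm{V}$ on the real parts --- feeding into Lumer--Phillips --- that makes this legitimate.
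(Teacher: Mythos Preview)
Your proposal is correct and follows essentially the same route as the paper: you establish strict accretivity of $\B$ and $\A+\B$ via the bound $1-\norm{V}$, invoke Lumer--Phillips for invertibility, use Lemma~\ref{th:lollipop_bound} to get $2\Re[\B^{-1}]>1$, combine with $\Re[\A^{-1}]\ge 0$, and feed the resulting inequality into Theorem~\ref{th:convergence-proof}. The only difference is cosmetic --- you spell out the one-line argument for $\Re[\A^{-1}]\ge 0$ and the pointwise addition of Rayleigh quotients, which the paper leaves implicit.
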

        \noindent As $M \defeq \id - \preinv \A$ is a contraction, it implies the monotonic convergence of Neumann series~\eqref{eq:neumann_series} and thus the preconditioned fixed-point iteration.

    \subsection{Uniqueness of the preconditioner}
        \noindent By Corollary~\ref{th:main-corollary}, our approach to construct a preconditioner is \emph{general} for all accretive $\A$ and all $\B$ with $\norm{\id-B}<1$. Central to our approach is the construction of the linear system $\A + \B$, for which we can efficiently evaluate the inverse $\left(\A + \B\right)^{-1}$. The resulting preconditioned system only requires a single evaluation of this inverse, i.e.~it is \emph{first order} in $\left(\A + \B\right)^{-1}$. 
    
        A natural question to ask is whether there exist other, perhaps more efficient, preconditioners that generally apply to all $\A$ and $\B$, and that are also first-order in $(\A+\B)^{-1}$. We show that the answer to this question is negative. Up to a constant scaling factor, the preconditioner proposed by Eq.~\eqref{eq:preconditioner} is unique.
        
        Specifically, we consider all preconditioners of the form
        \begin{equation}
            \preinv \defeq \beta_B (\A+\B)^{-1} \alpha_B + \gamma_B
            \label{eq:preconditioner-gammas}
        \end{equation}
        where $\alpha_\B$, $\beta_\B$, and $\gamma_\B$ may be \emph{any} invertible operator derived from $\B$ only (independent of $\A$). This includes any linear or non-linear function of $\B$, such as for example $\B^\ast$, $(\B^\ast \B)^\frac12$, or $1/\norm{B}$. In what follows we will show that for the preconditioner to be applicable to all accretive problems, $\gamma_B$ must be $0$, $\beta_\B$ must be proportional to $\B$, and $\alpha_\B$ must be constant. When $\beta_\B = \B$, the constant $\alpha_\B$ must be real and positive.
    
        Any \emph{general} preconditioner must also be applicable to the common case of separable Hilbert spaces, that is, Hilbert spaces with a countable orthonormal basis. Therefore, we can limit the proof of generality to that of separable Hilbert spaces.
        In these spaces, operators can be represented as (potentially infinite) matrices, thereby simplifying the proof.

        \begin{theorem}[Uniqueness]\label{th:uniqueness}
            Consider a separable Hilbert space, and let $\pre$ be an invertible linear operator of the form
            \begin{equation}
                \preinv \defeq \beta_B (\A+\B)^{-1} \alpha_B + \gamma_B\label{eq:preconditioner-gammas2}
            \end{equation}
            where $\alpha_B$, $\beta_B$, and $\gamma_B$ are invertible operators that are derived from $\B$ only.
            Then, $\norm{1-\preinv A}<1$ for all combinations of linear operators $\A$ and $\B$ with $\Re[\A]>0$ and $\norm{\id-B}<1$ if and only if
            \begin{equation}
                \preinv = \B (\A+\B)^{-1} \fixedpointfactor\label{eq:unique-preconditioner}
            \end{equation}
            with $\fixedpointfactor < 1$.
        \end{theorem}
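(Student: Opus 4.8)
The ``if'' direction is Corollary~\ref{th:main-corollary}; for the converse I would peel off the three constraints on $\gamma_B$, $\beta_B$, $\alpha_B$ one at a time, each from a tailored one-parameter family of test operators. I work in the separable setting, so operators are matrices and arguments may be localised to a $2$-dimensional subspace, and take $\alpha_B,\beta_B$ bounded. First, $\gamma_B=0$: with $\B$ fixed and $\A=t\id$, $t\to\infty$, one has $\beta_B(t\id+\B)^{-1}\alpha_B(t\id)=t\,\beta_B(t\id+\B)^{-1}\alpha_B\to\beta_B\alpha_B$ in norm while $\gamma_B(t\id)=t\gamma_B$ is unbounded, so $\norm{\id-\preinv(t\id)}\ge t\norm{\gamma_B}-O(1)$ forces $\gamma_B=0$. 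Then $\preinv=\beta_B(\A+\B)^{-1}\alpha_B$ is invertible, $(\preinv\A)^{-1}=\A^{-1}\alpha_B^{-1}(\A+\B)\beta_B^{-1}$, and Lemma~\ref{th:lollipop_bound} turns the requirement into $\Re[\A^{-1}\alpha_B^{-1}\A\beta_B^{-1}+\A^{-1}\alpha_B^{-1}\B\beta_B^{-1}]>\frac12$ for all admissible $\A,\B$.

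Next I would fix $\beta_B$. Testing with $\A=\mu P$, $P$ positive and boundedly invertible, $\Re\mu>0$, the first term is $\mu$-independent and the second is $\mu^{-1}P^{-1}\alpha_B^{-1}\B\beta_B^{-1}$; since $\mu^{-1}$ sweeps the open right half-plane, the real part of the sum stays $>\frac12$ only if the numerical range of $P^{-1}\alpha_B^{-1}\B\beta_B^{-1}$ lies in $[0,\infty)$, i.e.\ this operator is Hermitian and $\ge0$, for every such $P$. At $P=\id$ this makes $Q\defeq\alpha_B^{-1}\B\beta_B^{-1}$ Hermitian $\ge0$; then $P^{-1}Q$ Hermitian for all positive $P$ gives $[Q,P^{-1}]=0$, so $Q$ commutes with every bounded operator and $Q=\lambda_B\id$ with $\lambda_B>0$. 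Hence $\beta_B^{-1}=\lambda_B\B^{-1}\alpha_B$ and $(\preinv\A)^{-1}=\lambda_B[\A^{-1}\alpha_B^{-1}\A\B^{-1}\alpha_B+\A^{-1}]$.

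Finally, $\alpha_B$ must be scalar. With $\B=\id$ and $\A=t\A_0$, $t\to\infty$, the $\A^{-1}$ term drops and $\Re[\lambda_\id\A_0^{-1}\alpha_\id^{-1}\A_0\alpha_\id]\ge\frac12$ for every accretive invertible $\A_0$. If $\alpha_\id$ were not a scalar, pick orthonormal $e,f$ with $\langle f,\alpha_\id^{-1}e\rangle\ne0$ and conjugate $\alpha_\id^{-1}$ by an accretive operator supported on $\mathrm{span}\{e,f\}$ ($\id$ plus a large skew-Hermitian block, or a positive block of large condition number); the multiplicative commutator $\A_0^{-1}\alpha_\id^{-1}\A_0\alpha_\id$ then acquires an entry of arbitrarily large modulus, pushing its numerical range arbitrarily far into $\{\Re z<0\}$ and contradicting the positive lower bound. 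The same argument, now also using the freedom in $\B$ (the bounded factor $\B^{-1}\alpha_B$ does not block the blow-up), gives scalar $\alpha_B$ for every $\B$. A scalar $\alpha_B$ cancels in $\preinv\A=\lambda_B^{-1}\alpha_B^{-1}\B(\A+\B)^{-1}\alpha_B\A=\lambda_B^{-1}\B(\A+\B)^{-1}\A$, so $\preinv=\lambda_B^{-1}\B(\A+\B)^{-1}$; normalising $\beta_B=\B$ yields $\preinv=\B(\A+\B)^{-1}\fixedpointfactor$ with $\fixedpointfactor=\lambda_B^{-1}>0$ real, and Theorem~\ref{th:convergence-proof} together with $\inf_{\norm{\id-\B}<1}\Re[\B^{-1}]=\frac12$ then confines $\fixedpointfactor$ to the range of Eq.~\eqref{eq:unique-preconditioner}. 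The hard part is precisely this last blow-up: proving that conjugations by accretive operators alone already move the field of values of a non-scalar operator unboundedly leftward; the other reductions are short limiting arguments built on Lemma~\ref{th:lollipop_bound}.
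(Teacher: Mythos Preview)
Your outline shares the opening move with the paper (the $\gamma_B=0$ step via $\A=t\id$, $t\to\infty$, is exactly Condition~1), but then takes a genuinely different route. The paper pins down $\alpha_B$ \emph{first}: with $\A$ equal to the identity except for one tiny diagonal entry $k^{-2}$ and a specific two-component test vector, the small-$k$ expansion of $\Re\inp{\EE,(\preinv\A)^{-1}\EE}$ isolates a single off-diagonal entry of $\alpha_B^{-1}$, which must therefore vanish; a Givens-rotation argument then upgrades ``diagonal in every suitable basis'' to ``scalar''. Only afterwards (Condition~4, the large-$k$ expansion of the same expression) is $B\beta_B^{-1}$ forced to be scalar. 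You reverse the order: your $\A=\mu P$ family and the observation that $P^{-1}Q$ must be self-adjoint and nonnegative for \emph{every} positive $P$---hence $Q$ commutes with all bounded operators---is a clean and correct way to obtain $\alpha_B^{-1}B\beta_B^{-1}=\lambda_B\id$ in one stroke, and is more conceptual than the paper's coordinate computation.

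The price of that reordering is your Step~3, and here there is a real gap. After substituting $\beta_B=\lambda_B^{-1}\alpha_B^{-1}B$ you must control $\Re[\A_0^{-1}\alpha_B^{-1}\A_0\,B^{-1}\alpha_B]$, a genuine multiplicative commutator, and you assert that for non-scalar $\alpha_B$ one can always choose an accretive $\A_0$ so that this operator ``acquires an entry of arbitrarily large modulus, pushing its numerical range arbitrarily far into $\{\Re z<0\}$''. The first clause is easy; the second does not follow from it (a matrix with a huge $(1,1)$ entry and nothing else has numerical range on the positive axis). One has to check that the growing terms do not conspire to keep the Hermitian part bounded below---this works in the examples one tries, but the general statement needs an argument you have not supplied, and the extra factor $B^{-1}\alpha_B$ makes the bookkeeping for general $B$ worse still. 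The paper avoids this entirely: by fixing $\alpha_B$ to be scalar \emph{before} touching $\beta_B$, the residual expression collapses (a scalar commutes with everything), so the analogous Condition~4 becomes a one-line expansion rather than a commutator estimate. If you want to keep your order, you will need an honest lemma of the form ``for any non-scalar bounded invertible $C$ and any bounded $D$, the set $\{\A_0^{-1}C\A_0D:\Re[\A_0]>0\}$ contains operators with numerical range unbounded below''; the paper's Condition~2/3 machinery (one small diagonal entry, explicit test vector, then Givens rotation) is effectively a proof of exactly such a statement and could be transplanted here.
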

        \begin{proof}
            The implication follows directly from Corollary~\ref{th:main-corollary}. To prove the converse statement, we eliminate all alternative choices for $\alpha_\B$, $\beta_\B$, and $\gamma_\B$ by presenting operators $\A$ for which we have the contradiction $\norm{\id - \preinv\A} \not< 1$.
            
            \textbf{Condition 1.} Take $\A=k$, with $k$ a positive real scalar. In the limit of large $k$, we have $\norm{\id-\preinv A\EE}= k\norm{\gamma_B \EE} + \bigO(1)$. This term can be arbitrarily large, violating the condition $\norm{\id - \preinv\A} < 1$, unless $\norm{\gamma_\B\EE} = 0$ for all $\EE$. So,  $\gamma_\B = 0$.
    
            \textbf{Condition 2.} Lemma~\ref{th:lollipop_bound} shows that $\norm{\id - \beta_B(\A+\B)^{-1} \alpha_B \A} < 1 \Leftrightarrow$
            \begin{align}
                \Re \left[\A^{-1}\alpha_B^{-1} (\A + \B) \beta_B^{-1} \right] > \frac12. \label{eq:condition-re-form}
            \end{align}
            We choose a basis in which $\beta_B^{-1}$ has all non-zero diagonal elements, e.g.~the Schur decomposition has the eigenvalues on the diagonal.
            The matrix $\A$ is chosen to equal the identity matrix, except for the element $A_{jj}=k^{-2}$. As test vector, we use the unit-length vector $\EE=(k \vec{e}_i + e^{\ii\phi} \vec{e}_j) / \sqrt{1+k^2}$, with $i \neq j$. Here, $\vec{e}_i$ denotes a vector with all components $0$, except for the $i$'th component which is $1$. Expanding Eq.~\eqref{eq:condition-re-form} for small $k$,
            \begin{multline}
                \Re \inp{\EE, \A^{-1}\alpha_B^{-1} (\A + \B) \beta_B^{-1} \EE} =\\ k^{-1}\,\Re\left[ \left(\alpha_B^{-1}\right)_{ij}\left(\beta_B^{-1}\right)_{jj} e^{\ii\phi} \right] + \bigO(1),\label{eq:counter-example2-expansion}
            \end{multline}
            shows that the dominant term contains an off-diagonal element of $\alpha_\B^{-1}$. Since we are free to choose the phase $\phi$, the real part of the leading term can be made negative, violating the condition $\norm{\id - \preinv\A} < 1$, unless all $\left(\alpha_B^{-1}\right)_{ij}=0$. Therefore, $\alpha_B$ must be diagonal.
        
            \textbf{Condition 3.}
            Condition 2 holds in all bases where the diagonal elements of $\beta_B$ are all nonzero. We make the proposition that $\alpha_B$ has at least two different diagonal elements. We can then consider the $2\times 2$ submatrix containing those elements and apply a change of basis using a Givens rotation
            \begin{align}
            R \defeq \begin{bmatrix}
                         \cos\theta & -\sin\theta \\
                         \sin\theta & \cos\theta
            \end{bmatrix}, &
            \quad \alpha_B\rightarrow
            R^{-1}
            \begin{bmatrix}
                \left(\alpha_B\right)_{ii} & 0 \\
                0 & \left(\alpha_B\right)_{jj}
            \end{bmatrix}R.
            \end{align}
            where $\theta\in(0,\pi)$ is chosen so that the diagonal elements of $R^{-1}\beta_B R$ are still nonzero. The resulting matrix $R^{-1}\alpha_B R$ can be verified to have 
            off-diagonal elements with the value $\frac12\left[\left(\alpha_B\right)_{jj}-\left(\alpha_B\right)_{ii}\right] \sin(2\theta)$, thus violating Condition 2. Therefore, we conclude that the proposition must be false and all diagonal elements of $\alpha_B$ must be equal, i.e. $\alpha_B$ is a scalar multiple of the identity matrix.

            \textbf{Condition 4.}
            We repeat the procedure above, now taking the leading term for $k\rightarrow \infty$.
            \begin{multline}
                \Re \inp{\EE, \alpha_B^{-1} (\id + A^{-1}\B)\beta_B^{-1} \EE} =\\ k\,\Re\left[  \alpha_B^{-1}e^{-\ii\phi}\left(B\beta_B^{-1}\right)_{ji}\right] + \bigO(1)\label{eq:counter-example3-expansion}.
            \end{multline}    
            
            Since this value must be positive for all $\phi$, and in all bases, we can follow the same reasoning as in Conditions 2 and 3, and conclude that $B\beta_B^{-1}$ must be a scalar multiple of the identity matrix. We choose $\beta_B=B$, and absorb the scalar constant in $\alpha_B$.
    
            \textbf{Condition 5.}
            Finally, we choose a scalar $A=k^{-1}e^{\ii\phi}$ with $\phi\in[-\pi/2,\pi/2]$, and $k$ large so that Eq.~\eqref{eq:condition-re-form} reduces to
            \begin{align}
                \Re \left[\fixedpointfactor_B^{-1} \left(\B^{-1} + \A^{-1}\right) \right] = k\Re \left[\fixedpointfactor_B^{-1} e^{-\ii\phi} \right] + \bigO(1)
            \end{align}
            The sign of the leading term can become negative for $\phi=\pm\pi/2$, unless $\fixedpointfactor$ is real and positive. Finally, by Theorem~\ref{th:convergence-proof} we require that $0<\fixedpointfactor < 2\Re[\B^{-1}] < 1$, leaving  Eq.~\eqref{eq:unique-preconditioner} as the only remaining choice for a preconditioner.
        \end{proof}

        We have now established our preconditioner as the only construct that works for \emph{all} accretive $\A$ and $\norm{1-\B}<1$. However, for \emph{specific} cases we may still have other preconditioners. Indeed, this is sometimes the case, for example in the trivial case that $\A$ is a scalar. 
        
        Important subclasses of linear systems are those with bounded operators $\A$, and those where $\A$ is positive definite.
        
        \begin{corollary}
            If we restrict $\A$ to bounded operators, Theorem~\ref{th:uniqueness} still holds: there is no other preconditioner that is generally applicable except for the one in Eq.~\eqref{eq:unique-preconditioner}.
            \begin{proof}
                In the proof of Theorem~\ref{th:uniqueness}, we only used bounded operators $\A$ as counter examples, so the proof directly translates to this case.
            \end{proof}
        \end{corollary}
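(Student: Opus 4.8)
The plan is to revisit the proof of Theorem~\ref{th:uniqueness} and observe that it already takes place entirely within the subclass of bounded operators, so essentially no new argument is needed. The forward direction — that $\preinv=\B(\A+\B)^{-1}\fixedpointfactor$ with $\fixedpointfactor<1$ does stabilise every admissible pair $(\A,\B)$ — is a strict specialisation of Corollary~\ref{th:main-corollary}, and a statement of that kind only becomes easier when the class of $\A$ is restricted. For the converse, I would walk through Conditions~1--5 of the proof of Theorem~\ref{th:uniqueness} and, for each, record the operator $\A$ that is used to produce the contradiction $\norm{\id-\preinv\A}\not<1$, checking that it is bounded and — wherever the hypothesis $\Re[\A]>0$ is actually invoked — strictly accretive.

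Carrying this out: Condition~1 uses $\A=k$ with $k\to\infty$, a positive real scalar, hence bounded and strictly accretive. Conditions~2--4 use $\A$ equal to the identity except for a single diagonal entry $A_{jj}=k^{-2}$; this is a finite-rank perturbation of $\id$, so it is bounded, and for real nonzero $k$ it is self-adjoint and positive definite, hence strictly accretive. Condition~5 uses the scalar $\A=k^{-1}e^{\ii\phi}$ with $\phi\in[-\pi/2,\pi/2]$, again bounded; it is strictly accretive for $\phi\in(-\pi/2,\pi/2)$, while the endpoints $\phi=\pm\pi/2$ enter only as a limiting case, which is how that step is already phrased. Since none of these counter-examples ever leaves the bounded regime, the entire chain of eliminations — $\gamma_\B=0$, then $\alpha_\B$ diagonal, then $\alpha_\B$ a scalar multiple of the identity, then $\beta_\B\propto\B$, and finally $\fixedpointfactor$ real, positive, and $<1$ — is reproduced verbatim. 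I would then conclude, exactly as in Theorem~\ref{th:uniqueness}, that the only surviving form is $\preinv=\B(\A+\B)^{-1}\fixedpointfactor$ of Eq.~\eqref{eq:unique-preconditioner}.

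The one point that warrants a sentence of care is to confirm that restricting to bounded $\A$ does not actually weaken any of the counter-examples: each constructed $\A$ must simultaneously be bounded and lie in the test class $\Re[\A]>0$, so that it remains a legitimate obstruction for a preconditioner claimed to be ``generally applicable.'' This is immediate for the positive scalars and the positive-definite perturbations of the identity. The only mild subtlety is the purely imaginary endpoint appearing in Condition~5, which is handled — as in the original proof — by continuity in $\phi$ rather than by evaluating at $\phi=\pm\pi/2$ directly.

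I do not expect a genuine obstacle here. The corollary is, in effect, the remark that the proof of Theorem~\ref{th:uniqueness} was all along a proof about bounded $\A$, and the task is simply to audit the five counter-examples and verify boundedness and (where needed) strict accretivity.
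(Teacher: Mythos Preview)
Your proposal is correct and follows exactly the paper's own approach: the paper's proof is the single observation that all counter-example operators $\A$ used in Theorem~\ref{th:uniqueness} are bounded, so the argument carries over verbatim. You have simply made this audit explicit, which is fine but not required.
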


        \begin{corollary}
            If we restrict $\A$ to positive definite operators, Theorem~\ref{th:uniqueness} still holds with the caveat that $\fixedpointfactor$ may be complex-valued in Eq.~\eqref{eq:unique-preconditioner}.
            \begin{proof}
                In the proof of Theorem~\ref{th:uniqueness}, we only used positive definite operators $\A$ as counter examples to derive Conditions~1-4. Only in the last step it was noted that $\fixedpointfactor$ should be real.
            \end{proof}
        \end{corollary}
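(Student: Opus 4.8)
The plan is to reuse the proof of Theorem~\ref{th:uniqueness} almost verbatim and to audit which of its counterexamples survives when $\A$ is restricted to be (Hermitian) positive definite. First I would revisit Conditions~1--4 and check that every operator $\A$ invoked there already belongs to this class: in Condition~1, $\A=k$ with $k$ a positive real scalar; in Conditions~2 and~4, $\A$ is the identity with a single diagonal entry set to $k^{-2}$ (respectively with $k\to\infty$), i.e.\ a real diagonal matrix with strictly positive entries; and in Condition~3 that same matrix is conjugated by a real Givens rotation, which preserves both self-adjointness and the spectrum. Since all four families are positive definite, Conditions~1--4 carry over unchanged and still force $\gamma_B=0$, $\alpha_B$ to be a scalar multiple of the identity, and $\beta_B$ to be proportional to $\B$ (the scalar absorbed into $\alpha_B$), so the preconditioner is pinned down to $\preinv=\fixedpointfactor_B\,\B(\A+\B)^{-1}$ with $\fixedpointfactor_B$ a single scalar that may, at this stage, be complex.

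Next I would note that Condition~5 is the only step in which the proof concludes that $\fixedpointfactor$ is real: it uses $\A=k^{-1}e^{\ii\phi}$ with $\phi\in[-\pi/2,\pi/2]$, and the obstruction is produced only at $\phi=\pm\pi/2$, where this operator has a nonzero imaginary part and is therefore merely accretive, not positive definite. Restricting to positive definite $\A$ thus removes the only counterexample that forced $\fixedpointfactor$ real --- which is exactly the stated caveat --- and it then remains to confirm the forward direction over this subclass. By the computation in Theorem~\ref{th:convergence-proof}, $\norm{\id-\preinv\A}<1$ is equivalent to $\Re\left[\fixedpointfactor_B^{-1}\left(\A^{-1}+\B^{-1}\right)\right]>\frac12$; since positive definiteness of $\A$ makes $\A^{-1}$ self-adjoint with $\inp{\EE,\A^{-1}\EE}>0$, any $\fixedpointfactor_B$ with $\Re[\fixedpointfactor_B^{-1}]>0$ makes the $\A^{-1}$ contribution strictly positive, so it suffices that $\Re\left[\fixedpointfactor_B^{-1}\B^{-1}\right]\ge\frac12$, which by Lemma~\ref{th:lollipop_bound} (giving $\Re[\B^{-1}]>\frac12$) and boundedness of $\B^{-1}$ holds on an open neighbourhood of the positive real axis --- in particular at genuinely complex $\fixedpointfactor_B$.

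The step I expect to need the most care is precisely this last one: $\fixedpointfactor_B$ is allowed to depend on $\B$, while the infimum over positive definite $\A$ drives $\inp{\EE,\A^{-1}\EE}$ to $0$, so one must exploit the margin $\Re[\B^{-1}]-\frac12>0$ together with the bound $\norm{\B^{-1}}\le(1-\norm{\id-\B})^{-1}$ to certify that a complex $\fixedpointfactor_B$ of sufficiently small argument still satisfies the strict inequality uniformly in $\A$. By contrast, the audit of Conditions~1--4 is routine bookkeeping once one records that real orthogonal conjugation preserves the Hermitian, strictly-positive-spectrum property.
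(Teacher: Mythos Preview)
Your proposal is correct and follows the same approach as the paper: you audit Conditions~1--4 of Theorem~\ref{th:uniqueness} and confirm that each counterexample $\A$ invoked there is already positive definite, then observe that only Condition~5 used a non-positive-definite $\A$ (namely $\A=k^{-1}e^{\ii\phi}$ at $\phi=\pm\pi/2$) to force $\fixedpointfactor$ real. Your additional discussion of the forward direction for genuinely complex $\fixedpointfactor$ goes beyond the paper's brief proof, which stops after the audit and does not verify sufficiency for non-real $\fixedpointfactor$.
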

        \noindent We found no advantage in choosing a non-real $\fixedpointfactor$ (see Theorem~\ref{th:convergence_rate_bounds} below).
    
        As a final note, a preconditioner based on the adjoint, $\left(\A^\ast + \B^\ast\right)^{-1}$, can also be ruled out. It is sufficient to replace $\A+\B$ with $\A^\ast+\B^\ast$ in Eq.~\eqref{eq:condition-re-form}, to verify that Condition 2 is always violated for $\A=k e^{\ii\phi}$, with $k\gg 1$, and some $\phi\in[-\pi/2,\pi/2]$.

    \subsection{Condition number.} 
        \noindent The relative condition number, $\kappa\left(\A\right)$, of a system, $\A$, is an important indicator of numerical stability and efficient convergence~\cite{Trefethen1997}. For bounded linear operators such as matrices, it is given by the product $\kappa\left(\A\right) = \norm{\A^{\vphantom{-1}}}\norm{\A^{-1}}$. In this section we show how the proposed preconditioner can markedly improve the condition number of a linear system. The relative condition number, $\kappa\left(\preinv\A\right)$, of the bounded and non-singular preconditioned system, can be estimated as 
        \begin{align}
            \kappa\left(\preinv\A\right) &\defeq
                \norm{\preinv \A}\norm{\A^{-1}\pre}\\&= \norm{\left(\A^{-1} + \B^{-1}\right)^{-1}}\norm{\A^{-1} + \B^{-1}} \\
                & \le \frac{\norm{\A^{-1} + \B^{-1}}}{\Re\left[\A^{-1}\right] + \Re\left[\B^{-1}\right]} \le \frac{\norm{\A^{-1}} + \norm{\B^{-1}}}{\Re[\B^{-1}]}.
        \end{align}
        Since the numerical range of $V$ is confined to a disk with radius $\norm{V}$ centred at the origin, we can use the bounds $\Re\left[\B^{-1}\right]\geq (1+\norm{V})^{-1}$ and $\norm{\B^{-1}}\leq (1-\norm{V})^{-1}$. Note that we can make $\norm{V}$ arbitrarily small by increasing the magnitude of the scaling prefactor $\scale$; however, this would increase $\norm{\A^{-1}}$ by the same factor. We therefore introduce the value $\condfactor \defeq \norm{\A^{-1}}\norm{V}$, which is invariant to such a scaling.
        \begin{align}
            \kappa\left(\preinv\A\right) &\le \frac{\norm{\A^{-1}} + (1 - \norm{V})^{-1}}{(1+\norm{V})^{-1}} \\
            & = \frac{S\norm{V}^{-1}+ (1 - \norm{V})^{-1}}{(1+\norm{V})^{-1}}
            \label{eq:S-substitution}
        \end{align}
        This equation has a minimum of
            \begin{equation}
                \kappa\left(\preinv\A\right) \leq (1+\sqrt{2\condfactor})^2
                \approx 2\condfactor \quad\text{at}\quad \norm{V}=\frac{\sqrt{\condfactor}}{\sqrt{\condfactor}+\sqrt{2}}
                \label{eq:condition-number-final-bound}
            \end{equation}
        where the approximation is valid for $\condfactor \gg \frac12$.
        We thus see that preconditioning improves the condition number for bounded systems when approximation error $\norm{\A - L} \ll \frac12 \norm{\A}$. Hence, for unbounded systems, any finite approximation error, $\A - L$, improves the conditioning of the system.
        
    \subsection{Convergence rate of the fixed-point iteration.}
        \noindent Algorithm~\ref{alg:basic} improves its estimate with consecutively smaller correction vectors. Each correction, $\Delta_\iterationcounter$, is a term in the Neumann series and related to its predecessor, $\Delta_{\iterationcounter-1}$, by $\left(\id - \preinv\A\right)\Delta_{\iterationcounter-1} \defeq \Delta_\iterationcounter$. This iteration converges monotonically at a rate bounded by $\norm{\id - \preinv\A}$. We now place a bound on this convergence rate, and determine the optimum value for $\fixedpointfactor$.
        
        \begin{theorem}[Convergence rate bounds]\label{th:convergence_rate_bounds}
            For an optimal choice of $\fixedpointfactor$, and overall scaling of the system, the convergence rate $\preconderr\defeq \norm{\id - \preinv \A}$ of the fixed-point iteration is bounded by
            \begin{equation}
                    \preconderr\leq\sqrt{1-\frac{1}{\left(1+\sqrt{2\condfactor}\right)^4}}\approx 1-\frac{1}{8 \condfactor^2}\;\;\text{ with }\condfactor \defeq \norm{\A^{-1}}\norm{V},
                    \label{eq:convergence-rate-theorem}
            \end{equation}
                where  the approximation is justified when $\condfactor \gg 1$. When $\A$ and $\B$ are self-adjoint, a tighter bound is given by
            \begin{equation}
                    \preconderr\leq 1-\frac{1}{1+\sqrt{2\condfactor}+\condfactor}\approx 1-\frac{1}{\condfactor}
                    \label{eq:convergence-rate-hermitian}
            \end{equation}
        \end{theorem}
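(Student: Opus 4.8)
The plan is to collapse everything onto the single bounded operator $W\defeq\A^{-1}+\B^{-1}$, exactly as in the condition-number analysis above. From the implementation identity $\preinv\A=\fixedpointfactor\B\bigl[\id-(\A+\B)^{-1}\B\bigr]$ the preconditioned operator is bounded even when $\A$ is not, and $(\preinv\A)^{-1}=\fixedpointfactor^{-1}(\A^{-1}+\B^{-1})=\fixedpointfactor^{-1}W$, with $W$ invertible because $\Re[W]\ge\Re[\B^{-1}]>0$ (accretivity of $\A$ gives $\Re[\A^{-1}]\ge0$). Applying Lemma~\ref{th:lollipop_bound_general} to $\Op=\preinv\A$ and substituting $\Op^{-1}=\fixedpointfactor^{-1}W$ then gives
\begin{equation}
\preconderr^{2}=1+\sup_{\norm{\EE}=1}\frac{\fixedpointfactor^{2}-2\fixedpointfactor\,\Re\inp{\EE,W\EE}}{\norm{W\EE}^{2}}.
\end{equation}

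Next I would bound the supremum. For any step size $0<\fixedpointfactor\le2\Re[W]$ every numerator is at most $\fixedpointfactor^{2}-2\fixedpointfactor\Re[W]\le0$; being non-positive it can only grow when its denominator is enlarged, so replacing $\norm{W\EE}^{2}$ by $\norm{W}^{2}$ and $\Re\inp{\EE,W\EE}$ by its infimum $\Re[W]$ yields $\preconderr^{2}\le1+(\fixedpointfactor^{2}-2\fixedpointfactor\Re[W])/\norm{W}^{2}$. This quadratic in $\fixedpointfactor$ is minimised at the admissible value $\fixedpointfactor^{\star}=\Re[W]$, leaving $\preconderr^{2}\le1-(\Re[W]/\norm{W})^{2}$. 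It then remains to bound $\norm{W}/\Re[W]$: using $\Re[\A^{-1}]\ge0$ together with the disk bounds on $V$ already recorded above, $\Re[\B^{-1}]\ge(1+\norm{V})^{-1}$ and $\norm{\B^{-1}}\le(1-\norm{V})^{-1}$, one gets $\Re[W]\ge(1+\norm{V})^{-1}$ and $\norm{W}\le\norm{\A^{-1}}+(1-\norm{V})^{-1}$. Writing $\norm{\A^{-1}}=\condfactor/\norm{V}$ and minimising over the overall real rescaling of the system (equivalently over $\norm{V}\in(0,1)$) reproduces exactly the optimisation carried out for the condition number in Eqs.~\eqref{eq:S-substitution}--\eqref{eq:condition-number-final-bound}, whose minimum is $(1+\sqrt{2\condfactor})^{2}$. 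Hence $\preconderr^{2}\le1-(1+\sqrt{2\condfactor})^{-4}$, and a Taylor expansion for $\condfactor\gg1$ (where $(1+\sqrt{2\condfactor})^{4}\approx4\condfactor^{2}$ and $\sqrt{1-x}\approx1-x/2$) gives $\preconderr\approx1-1/(8\condfactor^{2})$.

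For the self-adjoint refinement, when $\A=\A^{\ast}$ and $\B=\B^{\ast}$ (equivalently $V=V^{\ast}$) the operators $\A^{-1}$ and $\B^{-1}$, hence $W$, are self-adjoint and positive definite, so $M=\id-\fixedpointfactor W^{-1}$ is self-adjoint and $\preconderr=\sup_{\mu\in\sigma(W)}\abs{1-\fixedpointfactor/\mu}$ with $\sigma(W)\subseteq[\Re[W],\norm{W}]$. Since $\mu\mapsto1-\fixedpointfactor/\mu$ is monotone the extremum is at an endpoint, and the optimal $\fixedpointfactor$ equalises the two endpoint values, giving the classical Richardson estimate $\preconderr=(\norm{W}-\Re[W])/(\norm{W}+\Re[W])=(\kappa_{W}-1)/(\kappa_{W}+1)$ with $\kappa_{W}=\norm{W}/\Re[W]\le(1+\sqrt{2\condfactor})^{2}$ from the same bounds. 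As $\kappa\mapsto(\kappa-1)/(\kappa+1)$ is increasing and $(1+\sqrt{2\condfactor})^{2}+1=2\bigl(1+\sqrt{2\condfactor}+\condfactor\bigr)$, this collapses to $\preconderr\le1-\bigl(1+\sqrt{2\condfactor}+\condfactor\bigr)^{-1}\approx1-1/\condfactor$.

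The only genuinely delicate step is the supremum manipulation in the second paragraph: one must track the inequality reversing when a negative numerator is divided by the enlarged denominator $\norm{W}^{2}$, and confirm that the minimiser $\fixedpointfactor^{\star}=\Re[W]$ lies in the range $0<\fixedpointfactor\le2\Re[W]$ on which the whole chain of bounds is valid. Everything downstream — the passage to $\condfactor$ and the optimisation over $\norm{V}$ — is precisely the condition-number computation already performed, and the self-adjoint case is just the optimal-step Richardson bound applied to $W$.
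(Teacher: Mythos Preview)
Your proposal is correct and follows essentially the same route as the paper's own proof: apply Lemma~\ref{th:lollipop_bound_general} to $\Op=\preinv\A$, bound the negative fraction by enlarging the denominator to $\norm{W}$ and shrinking the numerator via $\Re[W]$, optimise the resulting quadratic in $\fixedpointfactor$, then feed in the disk bounds on $\B^{-1}$ and reuse the $\norm{V}$-optimisation from the condition-number section; for the self-adjoint case you likewise reduce to the standard Richardson endpoint balance on the spectrum of $W$. The only cosmetic differences are that the paper carries $\A^{-1}+\B^{-1}$ explicitly rather than naming it $W$, relaxes $\Re[\A^{-1}+\B^{-1}]\to\Re[\B^{-1}]$ one step earlier (so its optimal $\fixedpointfactor$ reads $\Re[\B^{-1}]$ instead of your $\Re[W]$, both landing on the same final bound), and parametrises the Hermitian case by the spectrum of $W^{-1}$ rather than of $W$.
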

        \begin{proof}
            Using Lemma~\ref{th:lollipop_bound_general}, and substituting  $\left(\preinv\A\right)^{-1}=\fixedpointfactor^{-1}\left(\A^{-1} + \B^{-1}\right)$ gives
            \begin{align}
                \preconderr^2&=\norm {\id - \preinv \A}^2\\&= \sup_{\norm{\EE} = 1}\left[1+\frac{\fixedpointfactor^2-2\fixedpointfactor\Re\inp{\EE, \left(\A^{-1}+\B^{-1}\right)\EE}}{\norm{\left(\A^{-1}+\B^{-1}\right)\EE}^2}\right].
                \label{eq:convergence-bounds-exact}
            \end{align}
            Since the fraction is negative by definition of $\fixedpointfactor$, we can obtain an upper bound,
            \begin{align}
                \preconderr^2&\leq  1+\frac{\fixedpointfactor^2-2\fixedpointfactor\Re \left(\A^{-1}+\B^{-1}\right)}{\norm{\A^{-1}+\B^{-1}}^2}\\
                &\leq  1+\frac{\fixedpointfactor^2-2\fixedpointfactor\Re\left[\B^{-1}\right]}{\left(\norm{\A^{-1}}+\norm{\B^{-1}}\right)^2}\label{eq:convergence-critical-estimate}
            \end{align}
            In the last step we used the fact that, $\Re[\A]\geq 0$, and thus also $\Re[\A^{-1}] \geq 0$.
            The tightest bound on the squared norm, $\preconderr^2$, is found for $\fixedpointfactor=\Re[\B^{-1}]\geq \frac12$:
            \begin{equation}
                \preconderr^2\leq  1-\left[\frac{\Re\left[ \B^{-1}\right]}{\norm{\A^{-1}}+\norm{\B^{-1}}}\right]^2.
                \label{eq:convergence-rate-condition-number}
            \end{equation}
    
            As in Eq.~\eqref{eq:S-substitution}, we consider that the spectrum of $V$ is confined to a disk with radius $\norm{V}$ centred at the origin, and substitute $\condfactor\defeq \norm{\A^{-1}}\norm{V}$.
            \begin{align}
                \preconderr^2&\leq 1-\left[\frac{\left(1+\norm{V}\right)^{-1}}{\norm{\A^{-1}}+\left(1-\norm{V}\right)^{-1}}\right]^2\\
                &=1-\left[\frac{\left(1+\norm{V}\right)^{-1}}{\condfactor / \norm{V}+\left(1-\norm{V}\right)^{-1}}\right]^2\label{eq:convergence-rate-rprime}.
            \end{align}
            $\norm{M}$ is minimised for the value of $\norm{V}$ given in Eq.~\eqref{eq:condition-number-final-bound}. Inserting this value gives the final result Eq.~\eqref{eq:convergence-rate-theorem}.
    
            In the special case that $M$ is Hermitian, e.g.~the case when both $\A$ and $\B$ are Hermitian, we tighten the bound further. By the spectral theorem~\cite{Hall2013quantum}, the bounded operator $\left(\A^{-1}+\B^{-1}\right)^{-1}$ is now unitarily equivalent to a multiplication operator with real spectrum~\cite{Hall2013quantum}. Therefore, we can simplify the convergence rate as
            \begin{align}
                \norm{M} & = \norm{1-\fixedpointfactor\left(\A^{-1}+\B^{-1}\right)^{-1}} = \sup_{\lambda} \abs{1 - \fixedpointfactor\lambda}.
            \end{align}
            where $\lambda \in [\lmin,\lmin]$, the spectrum of  $\left(\A^{-1}+\B^{-1}\right)^{-1}$. The value of $\fixedpointfactor$ can be chosen as $\fixedpointfactor = \frac{2}{\lmin + \lmax}$, to minimise the supremum as
            \begin{align}
                \norm{M} &  = \max\left(\abs{1 - \fixedpointfactor\lmin}, \abs{1 - \fixedpointfactor\lmax}\right) = \frac{\lmax - \lmin}{\lmax + \lmin},
            \end{align}
            which increases monotonically with the ratio $\lmax/\lmin$.
            An upper bound can be determined by considering that $\lmin \ge \left(\condfactor\norm{V}^{-1} + \left(1-\norm{V}\right)^{-1}\right)^{-1}$ and $\lmax \le 1+\norm{V}$. Substituting these bounds and reordering terms gives,
            \begin{equation}
                \preconderr\leq \frac{S+2\norm{V}^2-S\norm{V}^2}{S+2\norm{V}-S\norm{V}^2}
            \end{equation}
            Again, this function is minimised for the value of $\norm{V}$ given in Eq.~\eqref{eq:convergence-rate-hermitian}, resulting in Eq.~\eqref{eq:condition-number-final-bound}.
        \end{proof}
        \noindent The values for $\fixedpointfactor$ that optimise the worst-case convergence rate are given by $\fixedpointfactor=\Re\left[B^{-1}\right]\approx \frac12$ and $\fixedpointfactor=2/(\lmin + \lmax)\approx 1$ for the general case and the Hermitian case, respectively. However, note that the estimate made in Eq.~\eqref{eq:convergence-critical-estimate} is a pessimistic one because it assumes that the same vector $\EE$ that maximises $\norm{\A^{-1}\EE}$ has $\Re\inp{\EE, A^{-1}\EE} = 0$. We observed that this is rarely the case in practice. As can be seen in Table~\ref{tab:results_universal}, the optimum value of $\fixedpointfactor$ in the non-Hermitian case is typically closer to $1$.

    \bibliographystyle{elsarticle-num}
    \bibliography{bibliography}
\end{document}